\newcommand{\NN}{\mathbb{N}}
\newcommand{\PP}{\mathbb{P}}
\newcommand{\ZZ}{\mathbb{Z}}
\DeclareMathOperator{\Gr}{Gr}
\DeclareMathOperator{\trop}{trop}
\newcommand{\cO}{\mathcal{O}}
\begin{document}

\title*{Khovanskii Bases of Cox-Nagata Rings and Tropical Geometry}
\author{Martha Bernal, Daniel Corey, Maria Donten-Bury, Naoki Fujita, and Georg Merz}
\institute{Martha Bernal \at Unidad Acad\'emica de Matem\'aticas UAZ, Calzada Solidaridad, Zacatecas, Mexico, \email{m.m.bernal.guillen@gmail.com}
\and Daniel Corey \at Yale University, Department of Mathematics, \email{daniel.corey@yale.edu}
\and Maria Donten-Bury \at University of Warsaw, Institute of Mathematics, Banacha 2, 02-097 Warszawa, Poland, \email{m.donten@mimuw.edu.pl}
\and Naoki Fujita \at Department of Mathematics, Tokyo Institute of Technology, 2-12-1 Oh-okayama, Meguro-ku, Tokyo 152-8551, Japan, \email{fujita.n.ac@titech.ac.jp}
\and Georg Merz \at  Mathematisches Institut,
Georg-August Universit\"at G\"ottingen
Bunsenstra\ss e 3-5,
D-37073 G\"ottingen, Germany,
 \email{georg.merz@mathematik.uni-goettingen.de}
}

%
%
\maketitle

\abstract*{The Cox ring of a del Pezzo surface of degree 3 has a distinguished set of 27 minimal generators. We investigate conditions under which the initial forms of these generators generate the initial algebra of this Cox ring.  Sturmfels and Xu provide a classification in the case of degree 4 del Pezzo surfaces by subdividing the tropical Grassmannian $\operatorname{TGr}(2,\mathbb{Q}^5)$. After providing the necessary background on Cox-Nagata rings and Khovanskii bases, we review the classification obtained by Sturmfels and Xu. Then we describe our classification problem in the degree 3 case and its connections to tropical geometry. In particular, we show that two natural candidates, $\operatorname{TGr}(3,\mathbb{Q}^6)$ and the Naruki fan, are insufficient to carry out the classification. }

\abstract{The Cox ring of a del Pezzo surface of degree 3 has a distinguished set of 27 minimal generators. We investigate conditions under which the initial forms of these generators generate the initial algebra of this Cox ring.  Sturmfels and Xu provide a classification in the case of degree 4 del Pezzo surfaces by subdividing the tropical Grassmannian $\operatorname{TGr}(2,\mathbb{Q}^5)$. After providing the necessary background on Cox-Nagata rings and Khovanskii bases, we review the classification obtained by Sturmfels and Xu. Then we describe our classification problem in the degree 3 case and its connections to tropical geometry. In particular, we show that two natural candidates, $\operatorname{TGr}(3,\mathbb{Q}^6)$ and the Naruki fan, are insufficient to carry out the classification. }

\section{Introduction}

The starting point for this chapter is the following problem proposed by Sturmfels and Xu as \cite[Problem~5.4]{SturmfelsEtAl}: \emph{determine all equivalence classes of 3-dimensional sagbi subspaces of $\Bbbk^6$}. In the next few paragraphs we explain its statement in detail and give an outline of the chapter.

Let us begin with clarifying two important aspects of our notation. First, instead of the names \emph{sagbi bases} resp. \emph{sagbi subspaces} (where \emph{sagbi}, first used in~\cite{RobbianoSweedler}, stands for ``subalgebra analogue to Gr\"obner bases for ideals'') we will use the name \emph{Khovanskii bases} resp. \emph{ Khovanskii subspaces}. This new name was introduced in a much more general setting in a recent article~\cite{KavehManon}.

Second, we make some assumptions on the field~$\Bbbk$. We usually take~$\Bbbk$ to be the field of rational functions $\mathbb{Q}(t)$, but to formulate and work on this problem one may consider any other field with a nontrivial  valuation. The residue field of $\Bbbk$ for the considered valuation will be denoted by $k$.

The fundamental objects for this chapter are Khovanskii bases and moneric sets. We repeat their definitions after Sturmfels and Xu; see~\cite[Sect.~3]{SturmfelsEtAl} for more details and comments on their properties.
By $\operatorname{val}\colon \Bbbk^{\times}\to \mathbb{Z}$ we denote a valuation map of~$\Bbbk$. If $\Bbbk=F(t)$ for some field~$F$, we use the following valuation: $\operatorname{val(p)}\in \mathbb{Z}$ is the unique integer $\omega$ such that $t^{-\omega}p(t)$ takes a nonzero value at $t=0$.
Then, for $f \in \Bbbk[x_1,\ldots,x_n]$ we can compute its \emph{initial form} $\operatorname{in}(f)$. If $\omega_0$ is the minimum of $\operatorname{val}$ for coefficients of all monomials in~$f$, then
$$\operatorname{in}(f) = (t^{-\omega_0}f)|_{t=0} \in k[x_1,\ldots,x_n].$$
That is, $\operatorname{in}(f)$ identifies all monomials of $f$ whose coefficients have smallest valuation.

\begin{definition}
We call a subset $\mathcal{F} \subset \Bbbk[x_1,\ldots,x_n]$ \emph{moneric} if $\operatorname{in}(f)$ is a monomial for all $f \in \mathcal{F}$.
\end{definition}

For a $\Bbbk$-subalgebra $U \subseteq \Bbbk[x_1,\ldots,x_n]$ we define the \emph{initial algebra} $\operatorname{in}(U)$ as the $k$-subalgebra generated by $\operatorname{in}(f)$ for  $f \in U$.

\begin{definition}
  We say that a subset $\mathcal{F} \subset U$ is a \emph{Khovanskii basis} of a $\Bbbk$-subalgebra $U \subseteq \Bbbk[x_1,\ldots,x_n]$ if
  \begin{itemize}
  \item $\mathcal{F}$ is moneric, and
  \item the initial algebra $\operatorname{in}(U)$ is generated by $\{\operatorname{in}(f) | f \in \mathcal{F}\}$ as a $\Bbbk$-algebra.
  \end{itemize}
\end{definition}

We are interested in Khovanskii bases of Cox-Nagata rings, which will be described in Section~\ref{section_cox}. After they are introduced, we will be able to explain how a 3-dimensional subspace of $\Bbbk^6$ determines a basis, possibly a Khovanskii basis, of the Cox ring of a del Pezzo surface of degree~3. We say that such a subspace is moneric (resp. Khovanskii) if the corresponding basis is moneric (resp. Khovanskii), see Definition~\ref{def_Ksubsp}. We look at moneric subspaces up to an equivalence relation which respects the property of being a Khovanskii subspace, see Definition~\ref{def_equivalence}. 

We suggest that the reader treats this text as an introduction to the concept of Khovanskii bases and related research problems. For us, understanding the geometric motivation and connections was as important as solving the combinatorial classification problem itself. This is the reason why, besides presenting our approach to answering the main question, we also spend significant amount of time on exploring its background. 

In Section~\ref{section_cox} we define the Cox ring and explain its construction for del Pezzo surfaces. We also introduce the Nagata's action, which provides a link between linear subspaces of $\Bbbk^n$ and choices of initial forms of generators of Cox rings of del Pezzo surfaces (i.e. candidates for moneric or Khovanskii bases of the Cox ring).

Section~\ref{section_degeneration} is dedicated to explaining the geometric consequence of a Khovanskii basis in terms of degenerations.
Roughly speaking, a Khovanskii basis of a (finitely generated) subalgebra~$U$ of the polynomial ring yields a degeneration of~$\operatorname{Spec}(U)$ to a toric variety. We show that we obtain even more if we choose a Khovanskii basis of the Cox ring $\operatorname{Cox}(X)$ of a variety $X$: we do not only obtain a toric degeneration of $\operatorname{Spec}(\operatorname{Cox}(X))$, but also toric degenerations of $X$ with respect to all possible embeddings.

In Section~\ref{section_counting} we explain and give examples for the problem which motivated Sturmfels and Xu to study Khovanskii bases of Cox-Nagata rings. It turns out that a Khovanskii basis allows us to compute the Hilbert function of a del Pezzo surface with respect to a specific embedding by counting lattice points in dilations of a rational convex polytope.

Finally, Sections~\ref{section_trop} and~\ref{section_examples} describe our first attempts to classify 3-dimensional Khovanskii subspaces of $\Bbbk^6$. First we describe two tropical varieties which we expect to be related to the problem: the tropical Grassmannian $\operatorname{TGr}(3,6)$ and the tropical moduli space of del Pezzo surfaces of degree~3. Then we explain how we tried to use them as parametrizing spaces for moneric and Khovanskii subspaces. The conclusion is that neither of these models has the combinatorial structure suitable to play this role.

\section{Cox-Nagata Rings}\label{section_cox}

Let $G$ be a linear group acting on a polynomial ring $R$ over a field
$\Bbbk$. Hilbert's fourteenth problem asks whether the ring of invariants
$R^G$ is a finitely generated $K$-algebra. The answer is affirmative
when the group $G$ is reductive and also when $G={\mathbb
  G}_a$. Nagata considered the action of a codimension $3$ subspace $G\subset {\mathbb C}^n$  acting on $R={\mathbb C}[x_1,\dots,x_n,y_1,\dots,y_n]$ via
\[x_i\mapsto x_i \text { and } y_i\mapsto y_i +\lambda_ix_i,\] 
where $(\lambda_1,\dots, \lambda_n)\in G$.
He proved that the ring of invariants
$R^G$ is not finitely generated for $n=16$, see~\cite{Nagata}. Mukai realized the ring of
invariants $R^G$ as a certain Rees algebra and as such, it is isomorphic to
the Cox ring of a blow-up (\cite{Mukai}). Mukai's
description of $R^G$ provides conditions for it to be finitely
generated and a way of computing its generators, at least for
$\operatorname{codim} G\leq 3$.

In this section we review Mukai's description of $R^G$. We recall the definition of Cox rings and study with some more detail the isomorphism between $R^G$ when $\operatorname{codim} G=3$ and the Cox ring of the blow-up of $\PP^2$ at $n$ points in general position. Next we specialize to the blow-up of six points and give a description of the invariants that generate $R^G$.

\subsection{Nagata's action} Let $R={\Bbbk}[x_1,\dots,x_n,y_1,\dots,y_n]$ be an algebra with a $\ZZ^n$-grading via setting $\deg(x_i)=\deg(y_i)=e_i$, where $e_1,\ldots,e_n$ is a standard basis of~$\mathbb{Z}^n$. Let $G\subset {\mathbb C}^n$ be a subspace of codimension $r$ given by the equations \[a_{11}t_1+...+a_{1n}t_n=\dots=a_{r1}t_1+...+a_{rn}t_n=0.\] We consider Nagata's action of $G$ on $R$. As $x_i$ is invariant for every $i=1,...,n$, we can extend the action to the localization
\[
  R_{\mathbf x}=R[x_1^{-1},\dots,x_n^{-1}]=\Bbbk[x_1^{\pm 1},\dots,x_n^{\pm 1},\frac{y_1}{x_1},\dots,\frac{y_n}{x_n}].
\]

The grading on $R$ extends naturally to a grading on $R_{\mathbf x}$ with $\deg(x_i^{-1})=-e_i$. Now, $\lambda=(\lambda_1,\dots,\lambda_n)\in G$ acts on $R_{\mathbf x}$ by $x_i\mapsto x_i$ and $\frac{y_i}{x_i}\mapsto \frac{y_i}{x_i} + \lambda_i$. Let $y_i'= \frac{y_i}{x_i}$. Then $\lambda\in G$ acts on $\Bbbk[x_1^{\pm 1},...,x_n^{\pm 1}, y'_1,\dots,y'_n]$ by $x_i\mapsto x_i$ and $y'_i \mapsto y'_i+\lambda_i$. A direct computation shows that the invariant ring $R_{\mathbf x}^G$ is generated over $\Bbbk[x_1^{\pm 1},...,x_n^{\pm 1}]$ by the linear polynomials
\[
l'_i:=a_{i1}y'_1+ \cdots +a_{in}y'_n, \;\;\; 1\leq i \leq r.
\]
Let $x_0=\prod_{j=1}^n x_j$ and
\begin{equation}\label{eqn:li's}
  l_i=x_0\; \cdot l'_i=\big(x_0\big)\big(a_{i1}\frac{y_1}{x_1}+ \cdots +a_{in}\frac{y_n}{x_n}\big).
\end{equation}
We define the algebra $U:=\Bbbk[l_1,...,l_r]\subset R_{\mathbf x}\cap R^G$.
Let $V$ be the $\Bbbk$-vector space spanned by $l_1,\dots,l_r$. Then $U$ is a $\ZZ$-graded ring and $V$ is its degree one part.
We also let $V_i \subset V$ be the polynomials in $V$ that do not have $y_i\prod_{i\neq j}x_j$ as a monomial and $I_i \subset U$ the ideal generated by $V_i$.
Then we have the following:
\begin{proposition}
  The invariant algebra $R^G$ is the extended multi-Rees algebra
  \[
   U[x_1,\dots,x_n]+\sum_{d\in \ZZ^n}\big(I_1^{d_1}\cap \cdots \cap I_n^{d_n}\big)x_1^{-d_1} \cdots x_n^{-d_n} \subset U[x_1^{\pm 1},\ldots,x_n^{\pm 1}].
  \]
\end{proposition}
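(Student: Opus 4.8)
The plan is to reduce the statement to an intersection inside the localization $R_{\mathbf x}$, analyze that intersection bidegree by bidegree using the $\mathbb{Z}^n$-grading together with the grading by $y$-degree, and finally match the resulting divisibility conditions with powers of the ideals $I_i$. Concretely, I would begin with two quick reductions. Since the $G$-action on $R_{\mathbf x}$ restricts to the given action on $R$, an element of $R$ is invariant in $R$ precisely when it is invariant in $R_{\mathbf x}$, so $R^G = R \cap R_{\mathbf x}^G$. By the computation recalled before the statement, $R_{\mathbf x}^G$ is generated over $\Bbbk[x_1^{\pm 1},\dots,x_n^{\pm 1}]$ by $l_1',\dots,l_r'$; because $l_i = x_0\, l_i'$ and $x_0$ is a unit in $R_{\mathbf x}$, this algebra is exactly $U[x_1^{\pm 1},\dots,x_n^{\pm 1}]$. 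Hence $R^G = R \cap U[x_1^{\pm 1},\dots,x_n^{\pm 1}]$, and it remains to decide which elements of $U[x_1^{\pm 1},\dots,x_n^{\pm 1}]$ are genuine polynomials.

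Expanding (\ref{eqn:li's}) shows that each $l_i$ is $\mathbb{Z}^n$-homogeneous of degree $(1,\dots,1)$ and linear in the $y$-variables, so $U_p$ (the degree-$p$ piece of the $\mathbb{Z}$-grading) consists of polynomials of $\mathbb{Z}^n$-degree $p\cdot(1,\dots,1)$ and $y$-degree $p$. As $R$ and $U[x^{\pm 1}]$ are both $\mathbb{Z}^n$-graded and graded by $y$-degree, I would intersect in each bidegree. Every $\mathbb{Z}^n$- and $y$-homogeneous element of $U[x^{\pm 1}]$ is uniquely $u\, x^{-d}$ with $u \in U_p$ and $d \in \mathbb{Z}^n$ fixed by the bidegree; writing $u$ as a sum of monomials $y^{\mathbf b}x^{p(1,\dots,1)-\mathbf b}$, one sees that $u\, x^{-d} \in R$ iff, for every $i$, $x_i^{d_i}$ divides each monomial of $u$ (a condition that is vacuous when $d_i \le 0$). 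Collecting these homogeneous pieces gives
\[
 R^G = \bigoplus_{d \in \mathbb{Z}^n} \{\, u \in U : x_i^{d_i} \mid u \text{ in } R \text{ for all } i \,\}\, x^{-d}.
\]

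The heart of the proof is then the identity, for each $i$ and each $m \ge 0$,
\[
 I_i^m = \{\, u \in U : x_i^m \mid u \text{ in } R \,\},
\]
with the convention $I_i^m = U$ for $m \le 0$; intersecting it over $i$ turns the summand above into $I_1^{d_1}\cap\cdots\cap I_n^{d_n}$, which is the extended multi-Rees algebra. The inclusion $\subseteq$ is immediate, since by definition the generators $V_i$ of $I_i$ are the elements of $V$ with no $y_i\prod_{j\neq i}x_j$ term, while every other degree-$(1,\dots,1)$ monomial is divisible by $x_i$; thus $I_i \subseteq (x_i)\cap U$ and $I_i^m \subseteq (x_i^m)\cap U$. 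For the reverse inclusion I would use that $l_1,\dots,l_r$ are linearly, hence algebraically, independent (the matrix defining $G$ has rank $r$), so $U \cong \Sym(V)$ is a polynomial ring. Setting $z_j := y_j\prod_{k\neq j}x_k$ and using the coefficient functionals $\beta_j$ with $\beta_j(l_k)=a_{kj}$, the assignment $v \mapsto (\beta_1(v),\dots,\beta_n(v))$ embeds $V$ into $\Bbbk^n = \langle z_1,\dots,z_n\rangle$ and realizes $U_p = \Sym^p V \subseteq \Sym^p(\Bbbk^n)$, under which the $y_i$-degree becomes the $z_i$-degree and $V_i = V \cap \{z_i = 0\}$. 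Choosing $v_0 \in V$ with $\beta_i(v_0) = 1$ (if no such $v_0$ exists then $V_i = V$ and the identity is clear), the splitting $V = \Bbbk v_0 \oplus V_i$ gives $\Sym^p V = \bigoplus_{k=0}^{p} v_0^k\,\Sym^{p-k}V_i$, and since $v_0 = z_i + (\text{a }z_i\text{-free term})$ the summand $v_0^k\,\Sym^{p-k}V_i$ has $z_i$-degree exactly $k$. A leading-term argument then identifies $\{u \in \Sym^p V : z_i\text{-degree} \le p-m\}$ with $\bigoplus_{k \le p-m} v_0^k\,\Sym^{p-k}V_i = \Sym^m V_i\cdot\Sym^{p-m}V = (I_i^m)_p$, and translating back (divisibility of every monomial by $x_i^m$ means $z_i$-degree $\le p-m$) proves the identity. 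I expect this reverse inclusion — deducing membership in $I_i^m$ from divisibility by $x_i^m$ — to be the main obstacle; once the two gradings are in place, everything else is bookkeeping.
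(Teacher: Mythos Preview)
Your argument is correct. The paper, however, does not prove this proposition: its entire proof is the single sentence ``A proof is found in \cite{Mukai} or in the book \cite{CoxRings}, section~4.3.4.'' There is thus nothing in the paper to compare your approach against; you have supplied a self-contained argument where the authors defer to the literature. The reduction $R^G = R\cap U[x_1^{\pm1},\dots,x_n^{\pm1}]$, the bidegree-by-bidegree analysis, and the key identity $I_i^{\,m} = \{u\in U : x_i^{\,m} \mid u \text{ in } R\}$ established via the splitting $V = \Bbbk v_0 \oplus V_i$ and a leading-$z_i$-degree count are all sound, and the step you correctly flag as the crux---the reverse inclusion---goes through exactly as you outline. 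One small remark: in the degenerate case where no $v_0$ with $\beta_i(v_0)=1$ exists you say the identity is ``clear,'' but it still requires the observation that then every monomial of $u\in U_p$ has $x_i$-exponent exactly $p$, so both sides equal $\bigoplus_{p\ge m} U_p$; this is easy but not entirely automatic.
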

\begin{proof}
  A proof is found in \cite{Mukai} or in the book \cite{CoxRings}, section ~4.3.4.
  \qed
\end{proof}

\subsection{Cox Rings} The Cox ring of a smooth projective variety $X$ over the field $\Bbbk$, with finitely generated divisor class group $\operatorname{Cl}(X)$, is the ring
\[
\operatorname{Cox}(X)=\bigoplus_{(a_1,\dots,a_r)\in \ZZ^r}H^0(X,\cO_X(a_1D_1+\dots+a_rD_r)),
\]
where $D_1,\dots,D_r$ is a fixed basis of $\operatorname{Cl}(X)\simeq \ZZ^r$. This ring has the structure of a $\Bbbk$-algebra. When it is finitely generated the variety $X$ is called a Mori Dream Space. This is the case for smooth del Pezzo surfaces of degree $1\leq d\leq 9$, for which generators and relations among them are known.   

We let $A$ be an $r \times n$ matrix with entries in $\Bbbk$ such that $G$ is the kernel of~$A$. We denote by $a^{(i)}$ the $i$-th column vector of $A$ and assume that they are pairwise linearly independent. Denote by $X_G$ the del Pezzo surface resulting from the blow-up of $\PP^{r-1}$ at $n$ different points with homogeneous coordinates $a^{(i)}$. The del Pezzo surface $X_G$ is determined by $G$ only up to isomorphism: an isomorphism of $\PP^2$ as a linear map leaves the rowspace of $A$, and therefore also the kernel $G$, invariant and induces an isomorphism of the corresponding del Pezzo surfaces. The Picard group $\operatorname{Pic}(X_G)$ is isomorphic to $\ZZ^{n+1}$ and is generated by the proper transform of the hyperplane class $H$ and the classes of the exceptional divisors $E_i$ for $i=1,...,n$. Thus the Cox ring of $X_G$ is:
\[
\operatorname{Cox}(X_G)=\bigoplus_{(d_0,\dots,d_n)\in \ZZ^{n+1}}H^0(X_G,\mathcal{O}(d_0H+d_1E_1+\dots+d_nE_n)).
\]
Given a divisor class $D=d_0H+d_1E_1+\dots+d_nE_n$, the corresponding homogeneous part $\operatorname{Cox}(X_G)_D$ is the space
$H^0\big(X_G,\mathcal{O}(d_0H+\dots +d_nE_n) \big)$. If $d_0 \geq 0$ then $D$ is the class of the proper transform of a degree $d_0$ hypersurface that has multiplicity $-d_i$ in the point $a^{(i)}$. Thus we can identify $H^0(X_G,\mathcal{O}(d_0H+\dots +d_nE_n) )$ with the space of homogeneous polynomials of degree $d_0$ in $\Bbbk[z]=\Bbbk[z_1,\cdots,z_r]$ that have multiplicity at least $-d_i$ at $a^{(i)}$. Let $I_i'$ be the vanishing ideal in $\Bbbk[z]$ of the point $a^{(i)}$. Then the latter vector space is precisely
\begin{equation}\label{eqn:I'i}
\big(\big(I'_1\big)^{-d_1}\cap\cdots\cap\big(I'_n\big)^{-d_n}\big)_{d_0}
\end{equation}
where $(I'_i)^{-d_i}=\Bbbk[z]$ if $-d_i\leq 0$. If $d_0< 0$ then $H^0(X_G,\cO(D))=0$.

Let us consider the map
\[
\operatorname{Cox}(X_G)_D\simeq H^0(X_G,\cO(D))\longrightarrow R^G_d
\]
given by
\begin{equation*}
  g(z_1,\dots,z_r)\mapsto g(l_1,\dots,l_r)x_1^{d_1}\dots x_n^{d_n},
\end{equation*}
where $d=(d_0+d_1,\dots,d_0+d_n)$, and $l_i, 1\!\leq i \leq\! r$ are as in (\ref{eqn:li's}). Recall that $l_i=x_0l'_i$ where $l'_i\in R^G_{\mathbf x}$ are the invariants in $R_{\mathbf x}$ of degree $0\in \ZZ^n$. As $g$ is homogeneous of degree $d_0$, then $g(l_1,\dots,l_r)=x_0^{d_0}g(l'_1,\dots,l'_r)$ is an invariant of degree $(d_0,...,d_0)\in \ZZ^n$. Thus $g(l_1,\dots,l_r)x_1^{d_1}\dots x_n^{d_n}$ is indeed an element of $R^G$ of degree $(d_0+d_1,\dots,d_0+d_n)$.
Now we notice that
\[
R^G=U[x_1^{\pm 1},\dots,x_n^{\pm 1}]\cap R={\Bbbk}[l_1,\dots,l_r][x_1^{\pm 1},\dots,x_n^{\pm 1}]\cap R.
\]
Given $d\in \ZZ^n$, any homogeneous element $f\in R^G_d$ admits a presentation of the form
\[
f=\sum_{v \,\in \ZZ^n}h_{v}(l_1,\dots,l_r)x_1^{v_1}\dots x_n^{v_n}
\]
where the $h_v$ are homogeneous of degree $d-v$. On the other hand, the $l_i$ are homogeneous of degree $(1,\dots,1)\in \ZZ^n$ and therefore $d-v=(d_0,\dots,d_0)$ for some $d_0\geq 0$. Thus, $h_{v}(z_1,\dots,z_r)\in {\Bbbk}[z_1,...,z_r]_{d_0}$. Moreover, further calculations show that $h_{v}(l_1,\dots,l_r)x_1^{v_1}\dots x_n^{v_n}$ being a polynomial in $R$ implies that $h_{v}(l_1,\dots,l_r)\in (I_i)^{-v_i}$ and therefore $h_v(z_1,\dots,z_r)\in (I'_i)^{-v_i}$. Thus, given $d\in \ZZ^n$ fixed, we have an isomorphism
\begin{equation}\label{eqn:coxring-isomorphism}
  \bigoplus_
  {\makebox[0pt]{$\substack{D=(d_0,...,d_n)\in \ZZ^n,\\
      d=(d_0+d_1,\dots,d_0+d_n)}$}}\;\;\operatorname{Cox}(X_G)_D \longrightarrow(R^G)_{d},\;\;\;\;\;g(z)\mapsto g(l_1,\ldots,l_r)x_1^{d_1}\cdots x_n^{d_n}
  \end{equation}
where $d=(d_0+d_1,\dots,d_0+d_n)$. This and the previous proposition prove the following:
\begin{proposition}\label{prop:coxring-prop}
$\operatorname{Cox}(X_G)$ is isomorphic to $R^G$.
\end{proposition}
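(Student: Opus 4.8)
The plan is to show that the degreewise assignment recorded in~(\ref{eqn:coxring-isomorphism}) assembles into a graded $\Bbbk$-algebra isomorphism. First I would note that both sides are naturally graded: $\operatorname{Cox}(X_G)$ carries its $\ZZ^{n+1}$-grading by $\operatorname{Pic}(X_G)$, while $R^G$ inherits the $\ZZ^n$-grading of $R$, and the rule $D=(d_0,\dots,d_n)\mapsto d=(d_0+d_1,\dots,d_0+d_n)$ is a group homomorphism $\ZZ^{n+1}\to\ZZ^n$. Summing~(\ref{eqn:coxring-isomorphism}) over all $d\in\ZZ^n$ therefore yields a single additive map $\Phi\colon\operatorname{Cox}(X_G)\to R^G$, $g(z)\mapsto g(l_1,\dots,l_r)x_1^{d_1}\cdots x_n^{d_n}$. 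Because the grading map is additive and $(gg')(l)=g(l)g'(l)$ while $x^{d}x^{d'}=x^{d+d'}$, the map $\Phi$ is in fact a ring homomorphism, so it suffices to check that~(\ref{eqn:coxring-isomorphism}) is bijective on each fixed graded piece $(R^G)_d$.

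Well-definedness is already established in the excerpt: since $g$ is homogeneous of degree $d_0$ and each $l_i=x_0 l_i'$ has $\ZZ^n$-degree $(1,\dots,1)$, the element $g(l_1,\dots,l_r)$ is invariant of degree $(d_0,\dots,d_0)$, so $g(l_1,\dots,l_r)x_1^{d_1}\cdots x_n^{d_n}\in R^G_d$. For injectivity I would use that the linear forms $l_1',\dots,l_r'$ are linearly independent, which is exactly the hypothesis that $A$ has rank $r$; hence they are algebraically independent over $\Bbbk[x_1^{\pm1},\dots,x_n^{\pm1}]$, the substitution $g\mapsto g(l_1,\dots,l_r)$ is injective, and multiplication by the unit $x_1^{d_1}\cdots x_n^{d_n}$ of $R_{\mathbf x}$ preserves injectivity.

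Surjectivity is the substantive part. Using the identity $R^G=U[x_1^{\pm1},\dots,x_n^{\pm1}]\cap R$ from the excerpt, any $f\in R^G_d$ has a presentation $f=\sum_{v\in\ZZ^n} h_v(l_1,\dots,l_r)x_1^{v_1}\cdots x_n^{v_n}$ with $h_v$ homogeneous of $\ZZ^n$-degree $d-v$. Since each $l_i$ has degree $(1,\dots,1)$, the term $h_v(l)$ has degree $(d_0,\dots,d_0)$ for some $d_0\ge 0$, which forces $v=d-(d_0,\dots,d_0)$; thus the presentation is really indexed by the integers $d_0\ge 0$, each determining a single class $D=(d_0,v_1,\dots,v_n)$ with image $d$, and $h_v\in\Bbbk[z]_{d_0}$. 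The key step, and the main obstacle, is to show that requiring $h_v(l)x^{v}$ to be a genuine element of $R$ (not merely of $R_{\mathbf x}$) is equivalent to the ideal membership $h_v(l)\in I_i^{-v_i}$ in the multi-Rees description of $R^G$ from the first Proposition, and that under the substitution $z_i\mapsto l_i$ this corresponds to $h_v(z)\in (I_i')^{-v_i}$, i.e.\ $h_v$ vanishes to order at least $-v_i$ at $a^{(i)}$. Comparing this with the description~(\ref{eqn:I'i}) of $\operatorname{Cox}(X_G)_D$ then identifies, summand by summand, the fibre $\bigoplus_{D\mapsto d}\operatorname{Cox}(X_G)_D$ with $(R^G)_d$. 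This compatibility between the Rees-algebra ideals $I_i$ and the vanishing ideals $I_i'$ of the blown-up points is where Mukai's correspondence genuinely enters; I would settle it by unwinding the definition of $I_i$ as the ideal generated by the forms of $V$ omitting the monomial $y_i\prod_{j\neq i}x_j$ and matching these against the local vanishing conditions at $a^{(i)}$. Granting it, the first Proposition together with~(\ref{eqn:coxring-isomorphism}) yields the desired isomorphism $\operatorname{Cox}(X_G)\cong R^G$.
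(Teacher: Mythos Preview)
Your proposal is correct and follows essentially the same route as the paper: the paper's ``proof'' consists of the sentence ``This and the previous proposition prove the following,'' referring to exactly the discussion you reconstruct---the degreewise map~(\ref{eqn:coxring-isomorphism}), the presentation of $f\in R^G_d$ via $R^G=U[x^{\pm1}]\cap R$, and the identification of the polynomiality constraint with the ideal memberships $h_v\in(I_i')^{-v_i}$ coming from the multi-Rees description. You are slightly more explicit than the paper about the ring-homomorphism structure and about injectivity via the algebraic independence of the $l_i'$, while the paper, like you, leaves the matching of $I_i$ with $I_i'$ to ``further calculations.''
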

We should observe that the ideals $I'_i$ in (\ref{eqn:I'i}) do not change if we rescale the columns of $A$, yet the image of a polynomial $g$ under the isomorphism (\ref{eqn:coxring-isomorphism}) can be different.

\subsection{The Cox ring of a del Pezzo surface}\label{sect-cox-delPezzo}
In \cite{Batyrev-Popov} it was proven that the Cox ring of a del Pezzo surface of degree at least 2 is generated by the global sections over the exceptional curves. An exceptional curve is one with self-intersection $-1$. Such a curve has only one global section (up to scalar multiplication). We use this knowledge and the isomorphism of the previous part to compute a set of  generators for $R^G$.

\begin{example}\label{example_dP4}
  Before we move to the case of del Pezzo surfaces of degree~3, most important for us, let us say what the Cox ring of a del Pezzo surface of degree~4 looks like. This is a sketch of a solution to Problem~6 on Surfaces in~\cite{Sturmfels_Chapter}.

  We need to identify all exceptional curves on the blow-up of $\mathbb{P}^2$ in 5 points $P_1,\ldots,P_5$ in general position. First, there are 5 exceptional divisors of the blow-up, $E_1,\ldots, E_5$. Then one checks that strict transforms of lines through two points $P_i,P_j$ are exceptional curves. As divisors, they are linearly equivalent to $H-E_i-E_j$. Finally, there is one conic through all 5 chosen points, and its strict transform also is an exceptional curve, linearly equivalent to $2H - E_1 - E_2 - E_3 - E_4 - E_5$. Thus we have 16 generators of the Cox ring in total.

  Relations between them come, roughly speaking, from the possibility of decomposing a divisor class as sums of the ones given above in a few different ways. For instance, $2H-E_1-E_2-E_3-E_4$ can be written as:
\begin{align*}
  (H-E_1-E_2) + (H-E_3-E_4) &= (H-E_1-E_3) + (H-E_2-E_4) =\\ &= (H-E_1-E_4) + (H-E_2-E_3).
\end{align*}
This lead to relations of corresponding sections which generate the Cox ring. A good explanation of these computations (also for del Pezzo surfaces of smaller degree) can be found in the MSc thesis of J.C.~Ottem,~\cite{OttemMSc}. It is worth noting that different choices of points give different relations, but the Cox rings are isomorphic.
\end{example}

Now, let $G$ and $A$ be as before with $r=3$, $n=6$ and suppose that the points $a^{(i)}\in \PP^{r-1}$ are in general position, that is, no three of them lie on a line and no six on a conic. Then $X_G$ is a del Pezzo surface of degree $3$ and it has $27$ exceptional curves, determined in a very similar way as in Example~\ref{example_dP4}. These are the classes of:
\begin{itemize}
\item the exceptional divisors $E_i$, $1\leq i \leq 6$,
\item the proper transforms of lines which pass through pairs of the blown-up points $L_{ij}$, $1\leq i < j \leq 6$,  and
\item the proper transforms of conics through five of these points, $Q_i$ with $1\leq i \leq 6$. 
\end{itemize}
  The classes in $\operatorname{Pic}(X_G)$ of these curves are $E_i$, $H-E_i-E_j$ and $2H-\sum_{j\neq i}E_j$. This means that $\operatorname{Cox}(X)$ is generated by the images under \eqref{eqn:coxring-isomorphism} of the unique polynomials $g$ in $\Bbbk[z_1,\dots,z_r]$ having the prescribed multiplicity on the blown-up points. Now we compute these images explicitly. For simplicity we will denote $[6]=\{1,\dots,6\}$.

  Let us start with the exceptional divisors $E_i$. We have that the only monomials of degree $0$ in $\Bbbk[z]$ are the non-zero constants and they all belong to $(I'_i)^{-1}=\Bbbk[z]$. Thus, by \eqref{eqn:coxring-isomorphism} we get
  \begin{equation*}
    \big((I'_i)^{-1}\big)_0 ={\Bbbk[z]}_0\simeq\operatorname{Cox}(X_G)_{E_i}\simeq (R^G)_{e_i},
  \end{equation*}
where $e_i\in \ZZ^6$ is the $i$-th standard basis vector, and this isomorphism maps $1\mapsto 1\cdot x_i$. Thus the elements $\{x_i\,|1\leq i\leq 6\}$ are generators of $R^G$.   

For each class of the form $H-E_i-E_j$, there is a polynomial of degree one in $I'_i\cap I'_j$, namely, the equation of the unique line through the points $a^{(i)}$ and $a^{(j)}$. This is
\[
\big((a_{2j}a_{3i}-a_{2i}a_{3j})z_1+(a_{1i}a_{3j}-a_{1j}a_{3i})z_2+(a_{1j}a_{2i}-a_{1i}a_{2j})z_3\big).
\]
The image of this polynomial in $R^G$ is
\[
g(l_1,...,l_3)\cdot (x_1x_2)^{-1}=-\sum_{k\neq i,j}p_{ijk}y_k(\prod_{s\notin\{i,j,k\}}x_s),
\]
where the $p_{ijk}$ are the Pl\"ucker coordinates of $A$, and it has degree $\sum_{k\neq i,j}e_k\in \ZZ^6$.

Finally, for the class $2H-\sum_{j\neq m}E_j$ there is also a unique polynomial of degree $2$ in $\cap_{j\neq m}I'_j$: the defining polynomial of the unique conic through the five points different from $a^{(m)}$.
A direct computation shows that the image of this conic has the form
\[
  G_m=(x_m) \;\;\sum_{\makebox[0pt]{$\scriptstyle i<j, i,j\in [6]\setminus m$}}  \;\;p_{([6]\setminus{i,j,m})} \;\; y_i y_j \;\;\prod_{\makebox[0pt]{$\scriptstyle k\in [6]\setminus \{i,j,m\}$}}\;\; p_{ijk}x_k + (y_m)\cdot\sum _{i\in [6]} (u_i-v_i)y_i\prod_{k\neq i} x_k
\]
where $u_i-v_i$ is a binomial of degree 4 in the Pl\"ucker coordinates of $A$. This conic generator has degree $e_m+\sum_{i\in [6]}e_i$

It is worth noting that even when it is difficult to write the exact expression of the polynomials $G_m$, its computation is straightforward. Also, we observe that the generators of $R^G$ are determined up to scalar multiple by $G$ since the Pl\"ucker coordinates of the matrix $A$ are. Yet, as observed after Proposition \ref{prop:coxring-prop}, $R^G$ is not itself an invariant of the isomorphism class of $X_G$.
\subsection{Moneric and Khovanskii subspaces.} The preceding paragraphs show how a codimension~3 vector subspace $G \subset \Bbbk^n$, or a matrix containing its basis, gives a minimal generating set of the Cox ring of a del Pezzo surface of degree~$9-n$. Having covered this, we can finally introduce Khovanskii and moneric subspaces.

\begin{definition}\label{def_Ksubsp}
We say that a codimension~3 subspace $G \subset \Bbbk^n$ is \emph{Khovanskii} (resp. \emph{moneric}) if the corresponding minimal generating set of the Cox ring of a del Pezzo surface of degree $9-n$ is a Khovanskii (resp. moneric) basis of $R^G$.
\end{definition}

We would like to consider moneric and Khovanskii bases up to the following equivalence relation:

\begin{definition}\label{def_equivalence}
Codimension~3 subspaces $G, G' \subset \Bbbk^n$ will be called \emph{equivalent} if the corresponding initial algebras of the Cox ring of a del Pezzo surface are equal.
\end{definition}

Note that if $G$ and $G'$ determine the same initial terms of the minimal generating set of corresponding Cox rings then they are equivalent.

\section{Khovanskii Basis and Degeneration of the Cox Ring}\label{section_degeneration}
Degeneration of  varieties is a powerful tool in algebraic geometry, used on many different occasions. The idea behind it is to introduce a notion of a ``limit"  of a family of algebraic varieties. However, since the Zariski topology on an algebraic variety is not well behaved in this sense (it is for example almost never Hausdorff), it turns out that a better replacement for an arbitrary family of varieties is the notion of a flat family. This notion has the desirable feature that limit points exist and are unique if we parametrize over a one dimensional variety. It also ensures that the points in the family, including the limit point have the same Hilbert function, and thus share many invariants such as e.g. the degree and the genus.
Degenerations thus motivate the following approach:
to compute properties of a given variety $X$ first degenerate the variety to a  more accessible variety $X^\prime$ and then do the computations on this variety.
This idea can be realized in the notion of a Khovanskii basis.

\subsection{Toric degenerations}
The following definition makes precise what we mean by a degeneration of a variety.
\begin{definition}
Let $(\Bbbk^{\circ}, \mathfrak{m})$ be a discrete valuation ring and $X$ be a variety over $\Bbbk=\operatorname{Quot}(\Bbbk^{\circ})$. A \emph{degeneration} of the variety $X$ is a flat family $\tilde{X}\to \operatorname{Spec} (\Bbbk^{\circ})$ such that $\tilde{X}\times_{\Bbbk^{\circ}} \operatorname{Spec} (\Bbbk)\cong X$. It is called a \emph{toric} degeneration if the special fiber $\tilde{X}\times_{\Bbbk^{\circ}} \operatorname{Spec} (\Bbbk^{\circ}/\mathfrak{m})$ is a toric variety.
\end{definition}
In this section we provide a method for degenerating a variety with respect to all possible embeddings at once. The idea is to degenerate the Cox ring of the given variety which contains information about all possible embeddings of the variety.
In order to talk about degenerations of a projective variety with respect to a specific embedding, we need to take the choice of a very ample line bundle into account.
\begin{definition}
Let $(\Bbbk^{\circ}, \mathfrak{m})$ be a discrete valuation ring and $X$ be a projective variety over $\Bbbk$, together with a very ample line bundle $L$. A family $\tilde{X}\to \operatorname{Spec}(\Bbbk^{\circ})$ together with a line bundle $\tilde{L}$ is called a \emph{toric degeneration of} $X$ \emph{with respect to the embedding given by}~$L$ if it is a toric degeneration, $\tilde{L}$ is flat over $\operatorname{Spec}(\Bbbk^{\circ})$, we have $L_{|\tilde{X}\times \operatorname{Spec}(\Bbbk)}\cong L$ and the line bundle $L_{|\tilde{X}\times \operatorname{Spec}(\Bbbk^{\circ}/\mathfrak{m})}$ is ample.
\end{definition}
Note that in the above definition we did not assume that $L_{|\tilde{X}\times \operatorname{Spec}(\Bbbk^{\circ}/\mathfrak{m})}$ is very ample. However, if we consider the Veronese embedding of the embedded variety $X$, we may assume that $X$ as well as the special fiber are embedded in the same $\PP^N$. More concretely by replacing $L$ with a high enough multiple $L^{\otimes k}$, we can make sure that $L_{|\tilde{X}\times \operatorname{Spec}(\Bbbk^{\circ}/\mathfrak{m})}$ is also very ample.

\subsection{Degenerations of del Pezzo surfaces via the Cox Ring}
Let $\Bbbk=F(t)$ for a field $F$ of characteristic $0$. We often assume $F=\mathbb{Q}$.
As in section~\ref{section_cox}, given $n\in \{1,\dots,8\}$ we can associate to a matrix $A\in Mat_{\Bbbk}(3,n)$ which has maximal rank, with kernel $G$, the variety $X_G$. The variety $X_G$ is the blow-up of $\PP^{2}$ at the points represented by $A$. Proposition \ref{prop:coxring-prop} gives us the following identity 
\[\operatorname{Cox}(X_G)\simeq \Bbbk[x_1,\dots ,x_n, y_1,\dots,y_n]^G=:R^G.\]
By varying the variable $t$ we can interpret $X_G$ as a family of del Pezzo surfaces over $F$ and $\operatorname{Cox}(X_G)$ as the corresponding family of Cox rings.
Note however that the only property we are using in this section about the variety $X_G$ is that its Cox ring $R_G$ is a subalgebra of a polynomial ring $\Bbbk[x_1,\dots,x_r]$.

Let $\Bbbk^{\circ}$ be the corresponding valuation ring of $\Bbbk$, i.e. the set of all elements having nonnegative valuation.

\begin{theorem}
Let $R\subset \Bbbk[x_1,\dots,x_n]$ be an algebra.
A finite Khovanskii basis $\mathcal{F}$ of $R$ induces a toric degeneration of $\operatorname{Spec}(R)$.
\end{theorem}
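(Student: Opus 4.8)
The plan is to build the degeneration as an explicit \emph{integral model} of $R$ over $\Bbbk^{\circ}$ and to show that its special fibre is the (toric) initial algebra. Write $\mathcal{F}=\{f_1,\dots,f_m\}$, let $\omega_i=\min\{\operatorname{val}(c) : c \text{ a coefficient of } f_i\}$, and set $\tilde{f}_i := t^{-\omega_i}f_i$. Then $\tilde{f}_i\in\Bbbk^{\circ}[x_1,\dots,x_n]$ and $\tilde{f}_i \equiv \operatorname{in}(f_i)\pmod{\mathfrak{m}}$; since $\mathcal{F}$ is moneric, each $\operatorname{in}(f_i)=x^{u_i}$ is a single monomial. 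Define
\[
\tilde{R}:=\Bbbk^{\circ}[\tilde{f}_1,\dots,\tilde{f}_m]\subseteq \Bbbk^{\circ}[x_1,\dots,x_n],
\]
and propose the family $\tilde{X}:=\operatorname{Spec}(\tilde{R})\to\operatorname{Spec}(\Bbbk^{\circ})$. I would then prove three things: this family is flat, its generic fibre is $\operatorname{Spec}(R)$, and its special fibre is $\operatorname{Spec}(\operatorname{in}(R))$, which is toric.

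Flatness and the generic fibre are the easy steps. Since $\tilde{R}$ is a $\Bbbk^{\circ}$-submodule of the free module $\Bbbk^{\circ}[x_1,\dots,x_n]$ it is torsion-free, and over the valuation ring $\Bbbk^{\circ}$ torsion-freeness is equivalent to flatness. Inverting $t$ gives $\tilde{R}\otimes_{\Bbbk^{\circ}}\Bbbk=\Bbbk[\tilde{f}_1,\dots,\tilde{f}_m]=\Bbbk[f_1,\dots,f_m]$, because $t$ is a unit in $\Bbbk$ and each $\tilde{f}_i$ differs from $f_i$ by a unit; recalling that a finite Khovanskii basis generates its algebra, this equals $R$, so the generic fibre is $\operatorname{Spec}(R)$. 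For the special fibre, the moneric and Khovanskii hypotheses give $\operatorname{in}(R)=k[x^{u_1},\dots,x^{u_m}]=k[\Sigma]$ with $\Sigma=\langle u_1,\dots,u_m\rangle$ an affine semigroup, so $\operatorname{Spec}(\operatorname{in}(R))$ is an affine toric variety. It remains only to identify $\tilde{R}\otimes_{\Bbbk^{\circ}}k=\tilde{R}/\mathfrak{m}\tilde{R}$ with $\operatorname{in}(R)$.

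This last identification is the crux. Reduction modulo $\mathfrak{m}$ sends $\tilde{f}_i\mapsto \operatorname{in}(f_i)$ and hence gives a surjection $\tilde{R}/\mathfrak{m}\tilde{R}\twoheadrightarrow\operatorname{in}(R)$; what must be shown is injectivity, equivalently the inclusion $\tilde{R}\cap\mathfrak{m}\Bbbk^{\circ}[x]\subseteq \mathfrak{m}\tilde{R}$ (the reverse inclusion is trivial). One direction is cheap: choosing for each $v\in\Sigma$ a lift $b_v:=\tilde{f}^{\beta(v)}$ with $\langle\beta(v),u\rangle=v$, a $\Bbbk^{\circ}$-linear dependence among the $b_v$ would, after dividing by the minimal coefficient valuation and reducing mod $\mathfrak{m}$, produce a linear dependence among the distinct monomials $x^v$ in $k[x]$, which is impossible; so the $b_v$ reduce to the $k$-basis $\{x^v\}$ of $\operatorname{in}(R)$ and are themselves $\Bbbk^{\circ}$-independent.

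The main obstacle is proving that the $b_v$ \emph{span} $\tilde{R}$ over $\Bbbk^{\circ}$, equivalently the nontrivial inclusion above. Here I would run the Khovanskii subduction algorithm: given $h=\sum_{\beta}a_\beta\tilde{f}^{\beta}\in\tilde{R}$ with $\bar{h}=0$, group the terms by value level $v=\langle\beta,u\rangle$; within each level the relation $\sum\bar{a}_\beta=0$ lets me replace the group by $t$ times a single monomial in the $\tilde{f}_i$ plus differences $\tilde{f}^{\beta}-\tilde{f}^{\beta_v}$ whose leading monomials $x^v$ cancel. Each such cancellation strictly lowers the leading $x$-monomial with respect to a fixed term order, so since the value monoid $\Sigma\subseteq\mathbb{Z}_{\geq0}^n$ is well-ordered and $\mathcal{F}$ is finite, the process terminates and exhibits $h\in\mathfrak{m}\tilde{R}$. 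Guaranteeing this termination — i.e. that the rewriting by Khovanskii relations is well-founded — is the delicate point, and it is exactly where both the moneric condition (so that $\Sigma$ lives in a well-ordered monomial monoid) and the defining property of a Khovanskii basis are used; this is the valued-coefficient analogue of the classical SAGBI flatness argument (cf. Kaveh--Manon). With the special fibre identified as $\operatorname{Spec}(\operatorname{in}(R))$, the family $\tilde{X}\to\operatorname{Spec}(\Bbbk^{\circ})$ is the desired toric degeneration.
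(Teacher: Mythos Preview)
Your argument is correct, but you take a more laborious route than the paper. The paper defines the integral model not as the $\Bbbk^\circ$-algebra generated by the normalised basis elements $\tilde f_i$, but as the set of \emph{all} normalised elements $R_{\Bbbk^\circ}=\{t^{-\operatorname{trop}(f)(\mathbf 1)}f : f\in R\}$, which one should read as $R\cap\Bbbk^\circ[x_1,\dots,x_n]$. With this choice the identification of the special fibre with $\operatorname{in}(R)$ is essentially tautological: if $g\in R_{\Bbbk^\circ}$ lies in $\mathfrak m\,\Bbbk^\circ[x]$ then $t^{-1}g$ is again in $R\cap\Bbbk^\circ[x]=R_{\Bbbk^\circ}$, so $g\in\mathfrak m R_{\Bbbk^\circ}$. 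The Khovanskii hypothesis is then invoked only at the very last step, to conclude that $\operatorname{in}(R)$ is a finitely generated monomial algebra, hence an affine semigroup algebra.

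What your approach buys is an integral model that is \emph{visibly of finite type} over $\Bbbk^\circ$, at the cost of the subduction argument you sketch; the paper's model is not obviously finitely generated over $\Bbbk^\circ$ (though this does follow, again by subduction), but the paper's definition of degeneration does not require finite type, so this point is not needed. Your termination argument for subduction is the standard one and is fine in outline; the well-foundedness comes from the fact that the initial monomials lie in $\mathbb Z_{\ge 0}^n$, which is well-ordered under any monomial order.
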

\begin{proof}
Let $\mathcal{F}$ be a finite Khovanskii basis.
Let us denote for $f\in \mathcal{F}$ by $\operatorname{trop}(f)(\mathbf{1})$ the minimum of the valuation of the coefficients of $f$.
Consider the $\Bbbk^{\circ}$-algebra
 \[R^G_{\Bbbk^{\circ}}:= \{ t^{\operatorname{trop}(f)(\mathbf{1})}f \ |\ f\in R^G\}\subset \Bbbk^{\circ}[x_1,\dots,x_n].
\] 
  We claim that $\operatorname{Spec}(R^G_{\Bbbk^{\circ}}) \to \operatorname{Spec} (\Bbbk^{\circ})$ is a toric degeneration of $\operatorname{Spec}(R^G)$.

 It is a flat morphism since $R^G_{\Bbbk^{\circ}}$ is a torsion free module over the discrete valuation ring $\Bbbk^{\circ}$. 
Now, the general fiber is given by 
\[
\operatorname{Spec} (R^G_{\Bbbk^{\circ}}\otimes_{\Bbbk^{\circ}} \Bbbk)\cong\operatorname{Spec}(R^G),
\]
 and the special fiber is 
\[ 
 \operatorname{Spec}(R^G_{\Bbbk^{\circ}}\otimes_{\Bbbk^{\circ}} \Bbbk^{\circ}/(t))\cong\operatorname{Spec}(\operatorname{in}(R^G)).
\]
The last thing to prove is that the algebra $\operatorname{in}(R^G_{\Bbbk^{\circ}})$ is an affine semigroup algebra. But this follows easily from the fact that it is a finitely generated algebra generated by monomials.
\qed
\end{proof}
As a consequence of the above theorem we conclude that a finite Khovanskii basis $\mathcal{F}$ of $R^G$ induces a toric degeneration of $\operatorname{Spec}(R^G)$.
Now we want to show how this toric degeneration gives a toric degeneration of $X_G$ with respect to any embedding. For this purpose the following lemma is helpful.
\begin{lemma}
Let $\mathcal{F}$ be a finite Khovanskii basis of $R^G$.
Let $L$ be a very ample line bundle on $X_G$ and $T:=\bigoplus T_q:= \bigoplus_{q\in {\NN}_0} H^0(X,L^{\otimes q})\subset R^G$ be its graded section ring. Then $\operatorname{in}(T)$ is finitely generated. 
\end{lemma}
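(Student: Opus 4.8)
The plan is to realize $\operatorname{in}(T)$ as an explicit graded slice of the affine semigroup algebra $\operatorname{in}(R^G)$ and then to deduce its finite generation from Gordan's lemma. The starting observation is that the $\operatorname{Pic}(X_G)$-grading on $R^G\subseteq\Bbbk[x_1,\dots,x_n,y_1,\dots,y_n]$ is visible on monomials: using the isomorphism \eqref{eqn:coxring-isomorphism} one checks that every monomial $x^ay^b$ occurring in $R^G$ has Picard class $\pi(a,b):=|b|\,H+\sum_{i=1}^n(a_i+b_i-|b|)E_i$, where $|b|=\sum_i b_i$ is its total $y$-degree. Since $\pi$ is a linear function of the exponent vector, the $t$-adic initial form of a $\operatorname{Pic}$-homogeneous element is again homogeneous of the same Picard degree. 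As the finite Khovanskii basis $\mathcal F$ may be taken $\operatorname{Pic}$-homogeneous (as is the case for the minimal generating set of the Cox ring) and is moneric, $\operatorname{in}(R^G)=k[\operatorname{in}(f)\mid f\in\mathcal F]$ is a finitely generated, $\operatorname{Pic}$-graded affine semigroup algebra, as already noted in the proof of the preceding theorem.

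First I would prove the identity $\operatorname{in}(T)=\bigoplus_{q\ge 0}\operatorname{in}(R^G)_{q[L]}=:S$, where $\operatorname{in}(R^G)_{D}$ denotes the Picard-degree-$D$ component. For the inclusion $\operatorname{in}(T)\subseteq S$, write an arbitrary $f\in T$ as $\sum_q f_q$ with $f_q\in(R^G)_{q[L]}=T_q$; because the $f_q$ have pairwise distinct Picard degrees their monomials are disjoint, so $\operatorname{in}(f)$ is the sum of those $\operatorname{in}(f_q)$ of least valuation, each of which lies in $\operatorname{in}(R^G)_{q[L]}\subseteq S$. For the reverse inclusion $S\subseteq\operatorname{in}(T)$, every monomial $\mu\in\operatorname{in}(R^G)$ of Picard degree $q[L]$ is a product $\prod_i\operatorname{in}(g_i)^{e_i}$ of the generating monomials; by multiplicativity of $\operatorname{in}$ we have $\mu=\operatorname{in}\!\big(\prod_i g_i^{e_i}\big)$, and $\prod_i g_i^{e_i}$ is $\operatorname{Pic}$-homogeneous of degree $q[L]$, hence lies in $T_q\subseteq T$. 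Thus $\mu\in\operatorname{in}(T)$, and the two inclusions give $\operatorname{in}(T)=S$.

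It then remains to show that $S$ is finitely generated. Writing $\operatorname{in}(R^G)=k[\mathcal A]$ for the affine semigroup $\mathcal A\subseteq\ZZ_{\ge 0}^{2n}$ generated by the exponent vectors $v_1,\dots,v_N$ of the monomials $\operatorname{in}(g)$, $g\in\mathcal F$, we have $S=k[\mathcal A_L]$ with $\mathcal A_L=\{a\in\mathcal A\mid \pi(a)\in\NN_0[L]\}$. Consider
\[
\mathcal K=\Big\{(u,q)\in\NN_0^{N+1}\ \Big|\ \textstyle\sum_{i=1}^N u_i\,\pi(v_i)=q[L]\Big\},
\]
the set of lattice points of a rational polyhedral cone; by Gordan's lemma $\mathcal K$ is a finitely generated monoid. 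The monoid homomorphism $\mathcal K\to\mathcal A$, $(u,q)\mapsto\sum_i u_i v_i$, has image exactly $\mathcal A_L$, so $\mathcal A_L$ is finitely generated as the image of a finitely generated monoid. Hence $S=\operatorname{in}(T)$ is a finitely generated $k$-algebra, as claimed.

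The main obstacle is the inclusion $S\subseteq\operatorname{in}(T)$: it is precisely here that the Khovanskii hypothesis on $\mathcal F$ is indispensable, since it is what allows each monomial of $\operatorname{in}(R^G)$ in the prescribed Picard degree to be lifted to a genuine section in $T_q$ having that monomial as its initial form. The two ingredients making this work are the multiplicativity of the initial form together with the homogeneity of the $t$-adic valuation for the $\operatorname{Pic}$-grading established in the first step; once $\operatorname{in}(T)$ has been identified with the slice $S$, the finiteness is a routine application of Gordan's lemma and carries no further difficulty.
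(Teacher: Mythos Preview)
Your argument is correct and follows essentially the same route as the paper's own proof. Both identify $\operatorname{in}(T)$ with the subalgebra of the affine semigroup algebra $\operatorname{in}(R^G)$ spanned by monomial products $\prod_i \operatorname{in}(f_i)^{\beta_i}$ whose Picard degree is a nonnegative multiple of $[L]$, and both conclude finite generation by applying Gordan's lemma to the rational polyhedral cone cut out by the linear condition $\sum_i\beta_i\deg(f_i)=q\deg(L)$ in $\NN_0^{N+1}$; your semigroup $\mathcal K$ is exactly the paper's semigroup $S$, and your surjection $\mathcal K\twoheadrightarrow\mathcal A_L$ is what the paper implicitly uses when passing from the finitely many $(\beta,k)$ generating $S$ to finitely many $f_\beta$ forming a Khovanskii basis of $T$. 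The only cosmetic difference is that you phrase the first step as the identity $\operatorname{in}(T)=\bigoplus_{q\ge 0}\operatorname{in}(R^G)_{q[L]}$ proved via two inclusions, whereas the paper phrases it as ``the $f_\beta$ with $\sum\beta_i\deg(f_i)\in\NN_0[L]$ form a Khovanskii basis of $T$''; your formulation has the minor advantage of making explicit why multiplicativity of $\operatorname{in}$ and Picard-homogeneity of initial forms are needed.
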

\begin{proof}
Let $f_1,\dots, f_w\in R^G$ be homogenous elements which form a Khovanskii basis of $R^G$. For each $\beta \in \NN_0^w$ consider the set of all polynomials $f_{\beta}:=\prod_{i=1}^w f_i^{\beta_i} $ such that there is a non-negative integer $p\in \NN$ for which we have 
\begin{equation} \label{khovanskiiproperty} \sum_{i=1}^{w} \beta_i \cdot \deg(f_i)= p\cdot \deg(L). 
\end{equation}
By a slight abuse of notation, we use $\deg$ for the function which assigns to a section as well as to a divisor the corresponding integer vector under the isomorphism $\operatorname{Pic}(X_G)\cong \mathbb{Z}^{n+1}$.
Our first claim is that this set forms a (possibly non-finite) Khovanskii basis of~$T$.
Indeed, let $f\in T_q$ be a homogeneous element. Using the assumption that the $f_i$'s form a Khovanskii basis for $R^G$, we deduce that there are finitely many $\alpha_j \in \NN_0^r$, and $c_j\in k$ which satisfy \[ \operatorname{in}(f)= \sum_{j} c_j\cdot \operatorname{in} (f_{\alpha_j}). \]
Since $f$ was homogeneous, the degrees of all  the $f_{\alpha_j}$ match the degree of $f$, and we deduce that all the $f_{\alpha_j}$ fulfill the above prescribed property of equation \eqref{khovanskiiproperty}.

Next, we want to prove that finitely many $f_{\beta}$ suffice to form a Khovanskii basis. The question can be reformulated into the question of the finite generation of the following semigroup:
\[ S:=\{ (\beta_1,\dots, \beta_w,k)\in \NN_0^{r}\times \NN \ | \ \sum_{i=1}^w \beta_i\cdot \deg(f_i)=k\cdot \deg (L)\}. \] Consider the cone $C(S)$ generated by
$S$ in $ \mathbb{R}^{w+1}$. Then  $C(S)\cap \mathbb{Z}^{w+1}=S$, hence  by Gordan's Lemma $S$ is finitely generated.
\qed
\end{proof}

\begin{theorem}\label{Degeneration_Second_Theorem}
A finite Khovanskii basis of $R^G=\operatorname{Cox(X_G)}$ induces a toric degeneration of $X_G$ with respect to all possible embeddings.
\end{theorem}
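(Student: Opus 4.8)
The plan is to reduce the statement to the affine case already handled by the first Theorem, applied not to $R^G$ itself but to the graded section ring $T$ of a chosen embedding, and then to projectivize. Since an embedding of $X_G$ is the same datum as a very ample line bundle $L$, and since the preceding Lemma produces a finite Khovanskii basis of the section ring $T=\bigoplus_{q\in\NN_0}H^0(X_G,L^{\otimes q})$ out of the \emph{single} Khovanskii basis of $R^G$ for \emph{every} such $L$, it suffices to fix one very ample $L$ and exhibit a toric degeneration of $X_G=\operatorname{Proj}(T)$ with respect to $L$. The key observation is that the finiteness obtained in the Lemma is exactly what allows us to mimic the construction in the proof of the first Theorem in a graded fashion.

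Concretely, I would form the graded $\Bbbk^{\circ}$-algebra
\[
T_{\Bbbk^{\circ}}:=\{\,t^{\operatorname{trop}(f)(\mathbf{1})}f \mid f\in T\,\}\subset \Bbbk^{\circ}[x_1,\dots,x_n],
\]
exactly as in the first Theorem but now remembering the $\NN_0$-grading by $q$ (each $T_q=H^0(X_G,L^{\otimes q})$ is homogeneous, so the degree-shift respects the grading). Setting $\tilde X:=\operatorname{Proj}(T_{\Bbbk^{\circ}})$ gives a family $\tilde X\to\operatorname{Spec}(\Bbbk^{\circ})$. Base-changing the grading-preserving isomorphisms from the first Theorem, the general fiber is $\operatorname{Proj}(T_{\Bbbk^{\circ}}\otimes_{\Bbbk^{\circ}}\Bbbk)\cong\operatorname{Proj}(T)\cong X_G$ with its $L$-embedding, while the special fiber is $\operatorname{Proj}(T_{\Bbbk^{\circ}}\otimes_{\Bbbk^{\circ}}\Bbbk^{\circ}/(t))\cong\operatorname{Proj}(\operatorname{in}(T))$. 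By the Lemma, $\operatorname{in}(T)$ is a finitely generated algebra generated by monomials, i.e.\ an affine semigroup algebra, so its $\operatorname{Proj}$ is a projective toric variety; this makes $\tilde X\to\operatorname{Spec}(\Bbbk^{\circ})$ a toric degeneration of $X_G$.

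It remains to install the line bundle. I would take $\tilde L:=\cO_{\tilde X}(1)$ coming from the $\operatorname{Proj}$ grading. On the general fiber it restricts to $\cO_{\operatorname{Proj}(T)}(1)\cong L$, and on the special fiber to $\cO_{\operatorname{Proj}(\operatorname{in}(T))}(1)$, which is ample by construction of $\operatorname{Proj}$. Flatness of the family follows as in the first Theorem from torsion-freeness of $T_{\Bbbk^{\circ}}$ over the discrete valuation ring $\Bbbk^{\circ}$ (checked degree by degree), and flatness of $\tilde L$ follows likewise. To upgrade \emph{ample} to \emph{very ample} on the special fiber --- so that the degeneration is genuinely one of embedded varieties in a common $\PP^N$ --- I would invoke the Veronese reduction noted after the definition of toric degeneration with respect to an embedding, replacing $L$ by a sufficiently high power $L^{\otimes k}$; this is harmless because the Khovanskii basis of $R^G$, and hence the finite Khovanskii basis of the section ring supplied by the Lemma, is available for $L^{\otimes k}$ as well.

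The main obstacle I expect is the passage from the affine degeneration of the first Theorem to the projective one: one must ensure that the $\operatorname{Proj}$ family is flat over $\Bbbk^{\circ}$ (not merely that $T_{\Bbbk^{\circ}}$ is a flat algebra) and that the identifications of the two fibers respect the grading, so that $\tilde L$ restricts as claimed and is ample on the toric special fiber. Verifying that the monomial generation of $\operatorname{in}(T)$ really yields a toric $\operatorname{Proj}$, together with the compatibility of the $\ZZ^{n+1}$-degree appearing in the Lemma with the single $\NN_0$-grading by $q$ used for $\operatorname{Proj}$, is where the careful bookkeeping lies; the remaining flatness and base-change assertions are routine once the grading is tracked correctly.
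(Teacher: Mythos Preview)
Your proposal is correct and follows essentially the same route as the paper: form the graded $\Bbbk^\circ$-algebra $T_{\Bbbk^\circ}$ from the section ring $T$ of a chosen very ample $L$, take $\tilde X=\operatorname{Proj}(T_{\Bbbk^\circ})$ with $\tilde L=\cO_{\tilde X}(1)$, identify the fibers as $\operatorname{Proj}(T)\cong X_G$ and $\operatorname{Proj}(\operatorname{in}(T))$, and invoke the Lemma to see that the special fiber is toric with $\cO(1)$ ample. The only difference is that you add the Veronese step to make the special fiber line bundle very ample, but the paper's definition of ``toric degeneration with respect to an embedding'' only asks for ampleness on the special fiber, so that extra step is not needed for the theorem as stated.
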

\begin{proof}
Let $L$ be a very ample line bundle on $X$ and let $T:=\bigoplus_{q\in \NN_{0}} H^0(X,L^{\otimes q})$ be its graded section algebra.
Define the algebra
\[
 T_{\Bbbk^\circ}=\{t^{-\operatorname{trop}(f)(\mathbf{1})}f \ | \ f\in T\}\subset \Bbbk^{\circ}[x_1,\dots,x_n,y_1,\dots, y_n],  \]
where again  $\operatorname{trop}(f)(\mathbf{1})$ denotes the minimum of the valuation of the coefficients of $f$.
 The $\NN_0$-grading on $T$ defines a natural grading on $T_{\Bbbk^\circ}$.
As $L$ is very ample the section ring $T$ is finitely generated. Hence, the same follows for the graded algebra $T_{\Bbbk^{\circ}}$.
The flatness of $T_{\Bbbk^{\circ}}$ can easily be derived from the torsion freeness over the discrete valuation ring $\Bbbk^{\circ}$.
Thus we get an induced flat morphism
\[ \operatorname{Proj} ( \bigoplus_{q\in\NN_0} (T_{\Bbbk^{\circ}})_q ) \to \operatorname{Spec} (\Bbbk^{\circ})
\]
and an induced  line bundle $\tilde{L}= \mathcal{O}_{T_{\Bbbk^{\circ}}}(1)$.

For the computations of the fibers, we use the following two identities on the graded pieces:
\begin{align*} &(T_{\Bbbk^{\circ}})_q\otimes_{\Bbbk^{\circ}} k\cong\operatorname{in} (T_q),\\
&(T_{\Bbbk^{\circ}})_q\otimes_{\Bbbk^{\circ}} \Bbbk\cong T_q,\end{align*}
where $\operatorname{in}(T_q)$ is the $\Bbbk^\circ/\mathfrak{m}$-vector space generated by $\operatorname{in}(f)$ for all $f\in T_q$.

Therefore we get the following:
\begin{align*}
&\operatorname{Proj}(\bigoplus_{q\in\mathbb{N}_0} (T_{\Bbbk^{\circ}})_q)\times \operatorname{Spec} (\Bbbk)= \operatorname{Proj}(\bigoplus_{q\in\mathbb{N}_0} (T_{\Bbbk^{\circ}})_q\otimes \Bbbk)=\operatorname{Proj}(\bigoplus_{q\in\mathbb{N}_0} T_q)\cong X,\\
&\operatorname{Proj}(\bigoplus_{q\in\mathbb{N}_0} (T_{\Bbbk^{\circ}})_q)\times \operatorname{Spec}( k)=\operatorname{Proj}(\bigoplus_{q\in\mathbb{N}_0} (T_{\Bbbk^{\circ}})_q\otimes k)= \operatorname{Proj}(\bigoplus_{q\in\mathbb{N}_0} \operatorname{in}(T_q))=:X_T.
\end{align*}
The previous lemma implies that the graded algebra $\operatorname{in}(T)=\bigoplus_{q\in\mathbb{N}_0} \operatorname{in}(T_q)$ is finitely generated by monomials, and can be seen as the semigroup algebra of 
\[
S:=\bigoplus_{q\in\NN_0} S_q:=\bigoplus_{ q\in\NN_0}\{ (\beta_1,\dots, \beta_w,q)\in \NN_0^w\times \{q\}\ | \ \sum \beta_i\cdot \deg(f_i)=k\cdot \deg (L)\}. \]
This shows that $X_T=\operatorname{Proj}(\bigoplus_{q\in\mathbb{N}_0} \operatorname{in}(T_q))$ is a toric variety and $\tilde{L}_{|\operatorname{Proj}(X_T)}= \mathcal{O}_{X_T}(1)$ is the induced ample line bundle.
\qed
\end{proof}

\section{Motivation: Hilbert functions of del Pezzo surfaces and Ehrhart-type formulas}\label{section_counting}
\label{sec:motivation}
\noindent The original motivation of the paper \cite{SturmfelsEtAl} is to give an interpretation of the Hilbert function of the Cox-Nagata ring $R^G$ as a counting function of the numbers of lattice points in slices of some explicit rational convex polyhedral cone. If we focus on a specific embedding of the variety $X_G$ into a projective space, then this interpretation induces a realization of the Hilbert function of $X_G$ with respect to the embedding as the Ehrhart function of an explicit rational convex polytope. 

Such an Ehrhart-type formula has appeared in many areas of mathematics: Berenstein-Zelevinsky's description of tensor product multiplicities for representations \cite{BerensteinZelevinsky}, Holtz-Ron's work on zonotopal algebras \cite{HoltzRon}, the theory of Newton-Okounkov bodies \cite{KavehKhovanskii, LazarsfeldMustata}, and so forth.  Having an Ehrhart-type formula for a mathematical object enables us  to relate it with many areas of mathematics through convex geometry. One more important point is that an Ehrhart-type formula is easy to compute since a polytope is bounded and given by a finite number of inequalities.

The theory of Khovanskii bases gives a systematic way to construct an Ehrhart-type formula for the Hilbert function of a graded ring under some assumptions. We explain this construction following \cite{SturmfelsEtAl}. Let $\Bbbk$ be the rational function field $\mathbb{Q}(t)$, $\Bbbk[x_1, \ldots, x_m]$ the polynomial ring over $\Bbbk$ in $m$ variables, and ${\bf x}^{\bf a} := x_1 ^{a_1} \cdots x_m ^{a_m}$ for ${\bf a} = (a_1, \ldots, a_m) \in \mathbb{Z}_{\ge 0} ^m$. Note that the residue field $k$ is identical to the field $\mathbb{Q}$ of rational numbers. Fix ${\bf d}_1, \ldots, {\bf d}_m \in \mathbb{Z}^n$, and define a $\mathbb{Z}^n$-graded $\Bbbk$-algebra structure on $\Bbbk[x_1, \ldots, x_m]$ by $\deg(x_i) := {\bf d}_i$ for $1 \le i \le m$. We assume that the homogeneous parts $\Bbbk[x_1, \ldots, x_m]_{\bf d}$, ${\bf d} \in \mathbb{Z}^n$, are finite-dimensional. Let $U$ be a $\mathbb{Z}^n$-graded $\Bbbk$-subalgebra of $\Bbbk[x_1, \ldots, x_m]$ with a finite Khovanskii basis $\mathcal{F} \subset U$. The $\mathbb{Z}^n$-grading of $U$ induces a $\mathbb{Z}^n$-graded $k$-algebra structure on ${\rm in}(U)$. The {\it Hilbert function} of $U$ is a map $\psi: \mathbb{Z}^n \rightarrow \mathbb{Z}_{\ge 0}$ given by \begin{equation*}\psi({\bf d}) := \dim_\Bbbk(U_{\bf d})\end{equation*} for ${\bf d} \in \mathbb{Z}^n$. Let $\mathbb{Z}_{\ge 0}({\rm in}(\mathcal{F}))$ (resp. $\mathbb{Z}({\rm in}(\mathcal{F}))$) be the subsemigroup (resp. the subgroup) of $\mathbb{Z}^m$ generated by $\{{\bf a} \in \mathbb{Z}_{\ge 0} ^m \mid {\bf x}^{\bf a} \in {\rm in}(\mathcal{F})\}$ and the zero vector. Denote by $\Gamma \subset \mathbb{R}^m$ the smallest real closed convex cone containing $\mathbb{Z}_{\ge 0}({\rm in}(\mathcal{F}))$. The $\mathbb{Z}^n$-graded $\Bbbk$-algebra structure on $\Bbbk[x_1, \ldots, x_m]$ induces a $\mathbb{Z}^n$-grading of the semigroup $\Gamma \cap \mathbb{Z}({\rm in}(\mathcal{F}))$. We observe that ${\rm in}(U)$ is identical to the semigroup algebra of $\mathbb{Z}_{\ge 0}({\rm in}(\mathcal{F}))$, which is regarded as a $\mathbb{Z}^n$-graded $k$-subalgebra of the semigroup algebra of $\Gamma \cap \mathbb{Z}({\rm in}(\mathcal{F}))$.

\begin{proposition}\label{Motivation_Proposition}
If the initial algebra ${\rm in}(U)$ is normal, then the value $\psi({\bf d})$ for ${\bf d} \in \mathbb{Z}^n$ equals the cardinality of 
\begin{equation*}
\{{\bf a} \in \Gamma \cap \mathbb{Z}({\rm in}(\mathcal{F})) \mid \deg({\bf a}) = {\bf d}\}.
\end{equation*}
\end{proposition}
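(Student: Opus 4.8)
The plan is to identify the number $\psi(\mathbf{d})$ in three successive guises — as $\dim_\Bbbk U_{\mathbf{d}}$, then as $\dim_k\operatorname{in}(U)_{\mathbf{d}}$, and finally as the lattice-point count in the statement. The first passage is the Hilbert-function invariance of the initial algebra, which holds for any $\ZZ^n$-graded subalgebra with finite-dimensional graded pieces. Since $k[x_1,\dots,x_m]$ is a domain, the weight initial form is multiplicative, $\operatorname{in}(fg)=\operatorname{in}(f)\operatorname{in}(g)$, so the set $\{\operatorname{in}(g)\mid g\in U\}$ is already closed under multiplication and one gets $\operatorname{in}(U)_{\mathbf{d}}=\operatorname{span}_k\{\operatorname{in}(g)\mid g\in U_{\mathbf{d}}\}$. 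The equality $\dim_\Bbbk U_{\mathbf{d}}=\dim_k\operatorname{in}(U)_{\mathbf{d}}$ is then the standard fact that, over a valued field, a finite-dimensional space and its space of initial forms have the same dimension: beginning from a $\Bbbk$-basis of $U_{\mathbf{d}}$ and reducing leading terms, one extracts exactly $\dim_\Bbbk U_{\mathbf{d}}$ elements whose initial forms are $k$-linearly independent and span $\operatorname{in}(U)_{\mathbf{d}}$. I expect this valued-vector-space lemma to be the step requiring the most genuine care, as everything downstream is formal once it is in hand; it gives $\psi(\mathbf{d})=\dim_k\operatorname{in}(U)_{\mathbf{d}}$.

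The second passage uses the Khovanskii-basis hypothesis. Because $\mathcal{F}$ is moneric and a Khovanskii basis, $\operatorname{in}(U)$ is generated over $k$ by the monomials $\operatorname{in}(f)$, $f\in\mathcal{F}$, hence is the semigroup algebra $k[S]$ of $S:=\ZZ_{\ge 0}(\operatorname{in}(\mathcal{F}))$, as observed just before the statement. Distinct exponent vectors $\mathbf{a}\in S$ give linearly independent monomials $\mathbf{x}^{\mathbf{a}}$, each homogeneous of degree $\deg(\mathbf{a})=\sum_i a_i\mathbf{d}_i$, so one reads off
\[
\dim_k\operatorname{in}(U)_{\mathbf{d}}=\#\{\mathbf{a}\in S\mid \deg(\mathbf{a})=\mathbf{d}\},
\]
the finiteness of this fiber being guaranteed by the standing assumption that the graded pieces of $\Bbbk[x_1,\dots,x_m]$ are finite-dimensional.

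It remains to replace $S$ by $\Gamma\cap\ZZ(\operatorname{in}(\mathcal{F}))$, and this is precisely where the normality hypothesis is indispensable. For an affine semigroup the semigroup algebra $k[S]$ is integrally closed if and only if $S$ is saturated in its group, that is $S=\RR_{\ge 0}S\cap\ZZ S=\Gamma\cap\ZZ(\operatorname{in}(\mathcal{F}))$, by Hochster's theorem on normal affine semigroup rings. Thus normality of $\operatorname{in}(U)$ forces $S=\Gamma\cap\ZZ(\operatorname{in}(\mathcal{F}))$, and substituting this into the count above gives
\[
\psi(\mathbf{d})=\#\{\mathbf{a}\in\Gamma\cap\ZZ(\operatorname{in}(\mathcal{F}))\mid \deg(\mathbf{a})=\mathbf{d}\}.
\]
The normality–saturation equivalence is the conceptual crux: without it one counts only the points of $S$, which may form a proper subsemigroup of $\Gamma\cap\ZZ(\operatorname{in}(\mathcal{F}))$, so the Ehrhart-type formula would undercount. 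The only remaining work is bookkeeping — checking that the $\ZZ^n$-grading is respected throughout and that all fibers of $\deg$ appearing above are finite.
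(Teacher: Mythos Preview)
Your proposal is correct and follows essentially the same three-step argument as the paper: first establish $\dim_\Bbbk U_{\mathbf{d}}=\dim_k\operatorname{in}(U)_{\mathbf{d}}$ by adjusting a basis until its initial forms are independent, then identify $\operatorname{in}(U)$ with the semigroup algebra of $S=\ZZ_{\ge 0}(\operatorname{in}(\mathcal{F}))$, and finally use normality to obtain the saturation $S=\Gamma\cap\ZZ(\operatorname{in}(\mathcal{F}))$. The paper's presentation differs only cosmetically---it phrases the saturation step via the field of fractions rather than citing Hochster explicitly---but the logic is the same.
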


\begin{proof}
Fix ${\bf d} \in \mathbb{Z}^n$ such that $U_{\bf d} \neq \{0\}$, and take a $\Bbbk$-basis $\{f_1, \ldots, f_r\}$ of $U_{\bf d}$. If the initial forms ${\rm in}(f_1), \ldots, {\rm in}(f_r)$ are linearly dependent, then the definition of initial forms implies that there exist $c_1, \ldots, c_r \in \Bbbk$ such that ${\rm in}(c_1 f_1 + \cdots + c_r f_r)$ does not belong to the $k$-linear space spanned by ${\rm in}(f_1), \ldots, {\rm in}(f_r)$. Then by replacing $f_i$ for some $1 \le i \le r$ with $c_1 f_1 + \cdots + c_r f_r$, we can increase the dimension of the $k$-linear space spanned by ${\rm in}(f_1), \ldots, {\rm in}(f_r)$. Repeating this procedure, we obtain a $\Bbbk$-basis $\{\tilde{f}_1, \ldots, \tilde{f}_r\}$ of $U_{\bf d}$ such that the initial forms ${\rm in}(\tilde{f}_1), \ldots, {\rm in}(\tilde{f}_r)$ are linearly independent. 

Then it follows that these form a $k$-basis of ${\rm in}(U)_{\bf d}$. In particular, the $\Bbbk$-algebra $U$ and its initial algebra ${\rm in}(U)$ share the same Hilbert function. Since ${\rm in}(U)$ is identical to the semigroup algebra of $\mathbb{Z}_{\ge 0}({\rm in}(\mathcal{F}))$, the group $\mathbb{Z}({\rm in}(\mathcal{F}))$ is regarded as a subset of the field of fractions of ${\rm in}(U)$. Hence the normality assumption on ${\rm in}(U)$ implies that the semigroup $\mathbb{Z}_{\ge 0}({\rm in}(\mathcal{F}))$ is saturated in $\mathbb{Z}({\rm in}(\mathcal{F}))$, and hence that $\mathbb{Z}_{\ge 0}({\rm in}(\mathcal{F})) = \Gamma \cap \mathbb{Z}({\rm in}(\mathcal{F}))$. In particular, the initial algebra ${\rm in}(U)$ is identical to the semigroup algebra of $\Gamma \cap \mathbb{Z}({\rm in}(\mathcal{F}))$. This proves the proposition. \qed
\end{proof}

\begin{remark}\normalfont
Our proof of Theorem \ref{Degeneration_Second_Theorem} in Section \ref{section_degeneration} also uses initial forms. In the case $U = R^G$, the description of $\psi$ in Proposition \ref{Motivation_Proposition} reflects the toric degeneration of $X_G$ constructed in the theorem. Assume that there exists a finite Khovanskii basis of $R^G$. Let us fix a very ample line bundle $L$ on $X_G$, and take a multi-degree ${\bf d}$ such that $(R^G)_{\bf d} = H^0(X_G, L)$. Then the Hilbert polynomial of $X_G$ with respect to the corresponding embedding is identical to the polynomial in $l$ given by $\psi(l{\bf d})$ for $l \gg 0$. In addition, by \cite[Chapter I\hspace{-.1em}I\hspace{-.1em}I, Theorem 9.9]{Hartshorne}, the Hilbert polynomial of $X_G$ is identical to that of the resulting toric variety from the toric degeneration. From these and our proof of Theorem \ref{Degeneration_Second_Theorem}, we obtain an Ehrhart-type description of $\psi(l{\bf d})$ for $l \gg 0$, which is identical to the formula in Proposition \ref{Motivation_Proposition}.
\end{remark}

Normality (or saturatedness) is a key to an Ehrhart-type formula in general. In the case of ${\rm in}(U)$, the theory of Gr\"{o}bner bases can be applied to prove the normality as follows. Since $\mathcal{F}$ is moneric, we deduce that ${\rm in}(U)$ is generated by a finite number of monomials, and hence that the ideal $I$ of relations is spanned by a set of binomials (\cite[Lemma 4.1]{Sturmfels_Book}). Then we obtain a useful sufficient condition for the normality of ${\rm in}(U)$ in terms of a Gr\"{o}bner basis of $I$ (see \cite[Proposition 13.15]{Sturmfels_Book}).

\begin{example}[elementary symmetric function]\label{Motivation_first_example}\normalfont
Following \cite[Example 3.2]{SturmfelsEtAl}, set \begin{equation*}e_l (t, x_1, \ldots, x_m) := \sum_{1 \le j_1 < \cdots < j_l \le m} t^{(j_1 - 1) + (j_2 - 2) + \cdots + (j_l - l)} x_{j_1} \cdots x_{j_l} \in \Bbbk[x_1,\dots, x_m] \end{equation*} for $1 \le l \le m$, and $\mathcal{F} := \{e_l (t, x_1, \ldots, x_m) \mid 1 \le l \le m\} \subset \Bbbk[x_1, \ldots, x_m]$.

We obtain the elementary symmetric functions by specializing at $t = 1$. Let $U$ be the $\Bbbk$-subalgebra of $\Bbbk[x_1, \ldots, x_m]$ generated by $\mathcal{F}$. It is easily checked that the initial algebra ${\rm in}(U)$ is identical to the $k$-subalgebra of $k[x_1, \ldots, x_m]$ generated by $\{x_1, x_1 x_2, \ldots, x_1 x_2 \cdots x_m\}$, and hence that $\mathcal{F}$ is a Khovanskii basis. In addition, the initial algebra ${\rm in}(U)$ is normal since $x_1, x_1 x_2, \ldots, x_1 x_2 \cdots x_m$ are algebraically independent. 

Since ${\rm in}(\mathcal{F}) = \{x_1, x_1 x_2, \ldots, x_1 x_2 \cdots x_m\}$, we have $\mathbb{Z}({\rm in}(\mathcal{F})) = \mathbb{Z}^m$ and
\begin{equation}\label{Motivation_equation}
\Gamma \cap \mathbb{Z}^m = \{(a_1, \ldots, a_m) \in \mathbb{Z}^m \mid a_1 \ge a_2 \ge \cdots \ge a_m \ge 0\}.
\end{equation}
Regard $U$ as a $\mathbb{Z}_{\ge 0}$-graded $\Bbbk$-algebra by the total degree in variables $x_1, \ldots, x_m$. Let $\psi: \mathbb{Z}_{\ge 0} \rightarrow \mathbb{Z}_{\ge 0}$ denote the Hilbert function. We deduce by Proposition \ref{Motivation_Proposition} and equation (\ref{Motivation_equation}) that the value $\psi(r)$ for $r \in \mathbb{Z}_{\ge 0}$ equals the cardinality of \begin{equation*}\{(a_1, \ldots, a_m) \in \mathbb{Z}^m \mid a_1 \ge a_2 \ge \cdots \ge a_m \ge 0,\ a_1 + \cdots + a_m = r\};\end{equation*} this is identical to the set of partitions of $r$ with at most $m$ parts.
\end{example}

Let us come back to our situation of interest.

\begin{example}[{\cite[Theorem 3.5 and Proposition 3.6]{SturmfelsEtAl}}]\normalfont\label{Motivation_second_example}
Let $G \subset \Bbbk^n$ be a generic subspace of dimension $1$, and $(\alpha_1, \ldots, \alpha_n) \in \Bbbk^n$ a nonzero element of $G$. We consider a $(2 \times n)$-matrix \begin{equation*}\begin{pmatrix}
\alpha_1 x_1 & \alpha_2 x_2 & \cdots & \alpha_n x_n\\
y_1 & y_2 & \cdots & y_n
\end{pmatrix},\end{equation*} and, for $1 \le i < j \le n$, denote by $p_{ij}$ the $(2 \times 2)$-minor of this matrix with column indices $i, j$, that is, $p_{ij} = \alpha_i x_i y_j - \alpha_j x_j y_i$. If we regard $\alpha_i x_i$ as an indeterminate, then the $\Bbbk$-subalgebra of $R^G$ generated by $\{p_{ij} \mid 1 \le i < j \le n\}$ is identical to the homogeneous coordinate ring of the Grassmann variety of lines in the $(n-1)$-dimensional projective space over $\Bbbk$ with respect to the usual Pl\"{u}cker embedding. In particular, the minors $p_{ij}$, $1 \le i < j \le n$, satisfy the Pl\"{u}cker relation: \begin{equation*}p_{il} p_{jm} - p_{im} p_{jl} = p_{ij} p_{lm}\end{equation*} for $1 \le i < j < l < m \le n$. This is a key to the fact that $\mathcal{F} := \{p_{ij} \mid 1 \le i < j \le n\} \cup \{x_i \mid i = 1, \ldots, n\}$ is a Khovanskii basis of $R^G$. The normality of the initial algebra ${\rm in}(R^G)$ follows from the criterion explained before Example~\ref{Motivation_first_example}. Thus we can apply Proposition \ref{Motivation_Proposition} to obtain an Ehrhart-type formula for the Hilbert function $\psi$ of $R^G$. We may assume without loss of generality that ${\rm val}(\alpha_1) < \cdots < {\rm val}(\alpha_n)$. Then since ${\rm in}(p_{ij}) \in k^\ast x_i y_j$, we deduce that the value of $\psi$ at $(r, u_1, \ldots, u_n) \in \mathbb{Z}^{n+1}$ equals the number of $(a_{i, j})_{i = 1, 2,\ j = 1, \ldots, n} \in \mathbb{Z}_{\ge 0} ^{2n}$ satisfying the following conditions:

\begin{align*}
&a_{2, 1} = 0,\ a_{2, 2} + \cdots + a_{2, l+1} \le a_{1, 1} + \cdots + a_{1, l},\ 1 \le l \le n-1,\\
&a_{1, l} + a_{2, l} = u_l,\ 1 \le l \le n,\ a_{2, 1} + \cdots + a_{2, n} = r.
\end{align*}
\end{example}

By \cite[Theorem 6.1]{SturmfelsEtAl}, for $4 \le n \le 8$, there exists a generic $\Bbbk$-subspace $G \subset \Bbbk^n$ of codimension $3$ such that $R^G$ has a finite Khovanskii basis $\mathcal{F}$ and ${\rm in}(R^G)$ is normal. Hence Proposition \ref{Motivation_Proposition} produces an Ehrhart-type formula for the Hilbert function $\psi$ of the $\mathbb{Z}^{n+1}$-graded algebra $R^G$. The case of degree $5$ del Pezzo surfaces is included in Example \ref{Motivation_second_example}. In the case of del Pezzo surfaces of degree $3$ and $4$, Sturmfels and Xu gave a system of explicit linear inequalities defining the corresponding rational convex polytope for a specific subspace $G$ (\cite[Example 1.3 and Corollary 5.2]{SturmfelsEtAl}). 

Since $G$ is generic, the function $\psi$ is independent of the choice of $G$. Hence we obtain a system of Ehrhart-type formulas for the same function $\psi$. If $G$ is a different generic subspace, then the induced Ehrhart-type formula may be different, that is, the corresponding rational convex polytope may not be unimodular equivalent. 
In order to compute $\psi$ rapidly, we want to determine a generic subspace $G$ such that the number of linear inequalities defining the corresponding rational convex polytope is as small as possible. 

In case of del Pezzo surfaces of degree $4$, Sturmfels and Xu proved that the optimal number of linear inequalities is $12$. Their proof relies on giving the complete classification of the subspaces $G$ which produce Khovanskii bases (\cite[Theorem 4.1]{SturmfelsEtAl}). In addition, they conjectured that in the case of degree $3$ the number $21$ of linear inequalities in \cite[Corollary 5.2]{SturmfelsEtAl} is minimal. One motivation of this research is to generalize their argument for degree $4$ del Pezzo surfaces to the case of degree $3$, and to prove the conjecture by giving a complete characterization of all subspaces $G$ for which $R^G$ has a finite Khovanskii basis.

\section{Tropicalization}\label{section_trop}

Tropicalization is a procedure that associates to a very affine variety $X$ (i.e. a closed subvariety of an algebraic torus) a rational polyhedral complex $\operatorname{Trop}(X)$ in $\mathbb{R}^N$. Of the many ways characterizing $\operatorname{Trop}(X)$, there are two descriptions that will be useful for our purposes.  In terms of initial degenerations, $\operatorname{Trop}(X)$ is the set of all $w \in \mathbb{R}^N$ such that $\operatorname{in}_w X$ is nonempty (note that the ideal of $\operatorname{in}_w X$ coincides with the initial ideal as defined in \cite{BossingerEtAl} in the case where the valuation on $\mathbb{K}$ is trivial; for the definition of $\operatorname{in}_w X$ in general, see \cite[Section 5]{Gubler}). This allows us to compute $\operatorname{Trop}(X)$ using computer algebra software such as \emph{gfan}~\cite{gfan}. When $X$ is defined over an algebraically closed field with a nontrivial valuation, $\operatorname{Trop}(X)$ is the closure of the set of coordinatewise valuations. As this is the description we use for our classification problem, we will provide a more precise formulation of this characterization. 

Let $\mathbb{K}$ be a field with a (possibly trivial) valuation $\operatorname{val}: \mathbb{K}^* \to \mathbb{R}$, and $X$ a closed subvariety of the algebraic torus $\mathbb{G}_m^N(\mathbb{K})$. We define the Beri-Groves set $\mathcal{A}(X)$ of $X$ to be 
\begin{equation*}\mathcal{A}(X) = \{(\operatorname{val}(x_1), \ldots, \operatorname{val}(x_N)) \in \mathbb{R}^N \; | \; (x_1,\ldots, x_N)\in X\}. \end{equation*}
Now, suppose $\mathbb{L}$ is an algebraically closed field extension of $\mathbb{K}$ with a nontrivial valuation extending the the valuation on $\mathbb{K}$. By abuse of notation, we will also call this valuation $\operatorname{val}:\mathbb{L}^* \to \mathbb{R}$. Let $X_{\mathbb{L}}$ denote the extension of $X$ to a closed subvariety of $\mathbb{G}_m^N(\mathbb{L})$. Tropicalization is unchanged under field extension, i.e. $\operatorname{Trop}(X_{\mathbb{L}}) = \operatorname{Trop}(X)$, see~\cite[Thm 3.2.4]{MaclaganSturmfels}.  By the Fundamental Theorem of Tropical Geometry~\cite[Thm 3.2.3]{MaclaganSturmfels}, the closure of $\mathcal{A}(X_{\mathbb{L}})$ in $\mathbb{R}^N$ is $\operatorname{Trop}(X_{\mathbb{L}})$. Moreover, if the valuation on $\mathbb{K}$ is trivial, then $\operatorname{Trop}(X)$ is a rational polyhedral \textit{fan} in $\mathbb{R}^N$.

Now let us specialize to the case of the Tropical Grassmannian. We let $\mathbb{K} = \mathbb{Q}$ and $\mathbb{L}$ will denote Puiseux series over $\mathbb{C}$.  The Grassmannian $\Gr(d,\mathbb{Q}^n)$ can be viewed as a subvariety of $\mathbb{P}^{N-1}(\mathbb{Q})$ via its Pl\"ucker embedding, where $N = {n \choose d}$. Let $\Gr_0(d,n)$ be the intersection of the affine cone of $\Gr(d,\mathbb{Q}^n)$ with the dense torus (the locus where all Pl\"ucker coordinates are nonzero). This gives us a closed subvariety of $\mathbb{G}_m^N(\mathbb{Q})$, so we may form the tropicalization $\operatorname{Trop}(\Gr_0(d,\mathbb{Q}^n))$. Let us abbreviate this by $\operatorname{TGr}(d,\mathbb{Q}^n)$. This is a rational polyhedral fan in $\mathbb{R}^N$. We index the coordinates of $\mathbb{R}^N$ by the $d$-tuples of the numbers 1 through $n$.  In \cite{SpeyerEtAl}, Speyer and Sturmfels give a combinatorial description of $\operatorname{TGr}(2,\mathbb{Q}^n)$ in terms of the space of phylogenetic trees on $n$ leaves (up to sign). In particular, they show that $d = (d_{ij})$ is a point in $\operatorname{TGr}(2,\mathbb{Q}^n)$ if and only if for each 4-tuple $1\leq i<j<k<l\leq n$, the maximum of 
\begin{equation*}d_{ij}+d_{kl}, \;\; d_{ik}+d_{jl},\;\; d_{il}+d_{jl} \end{equation*} 
is attained at least twice. 

In the classification of Khovanskii subspaces $G$ of $\Bbbk^5$, $G$ can be viewed as a $\Bbbk$-valued point of $\Gr_0(2,\mathbb{Q}^5)$, where $\Bbbk=\mathbb{Q}(t)$. If $(p_{ij})$ are the Pl\"ucker coordinates of $G$, then the valuations of the Pl\"ucker coordinates $d_{ij} = -\operatorname{val}(p_{ij})$ are integers. This means that the Beri-Groves set of the $\Bbbk$-valued points of $\Gr_0(2,\mathbb{Q}^5)$ is the set of integer points in $\operatorname{TGr}(2,\mathbb{Q}^5)$.

The Naruki fan is a fan structure on the tropicalization of the moduli space of marked del Pezzo surfaces of degree 3. Let $Y^6$ be the moduli space of degree 3 marked del Pezzo surfaces. We can express $Y^6$ as an open subvariety of the space of configurations of 6 labeled points in $\mathbb{P}^2$ in linear general position; call this space $X^6$. By the Gelfand-MacPherson correspondence, we can recover $X^6$ from the space of $3\times 6$ matrices by taking appropriate quotients. Recall that the Grassmannian $\Gr(3,\mathbb{K}^6)$ is identified with the quotient of $Mat_{\mathbb{K}}(3,6)$ by the left-multiplication action of $\operatorname{GL}_3$, i.e. 
\begin{equation*}  \Gr(3,\mathbb{K}^6) =  \operatorname{GL} _3 \backslash Mat_{\mathbb{K}}(3,6) \end{equation*}   
Now let $\Gr_0(3,\mathbb{K}^6)$ be the points in $\Gr(3,\mathbb{K}^6)$ with a representative in $Mat_{\mathbb{K}}(3,\mathbb{K}^6)$ whose maximal minors do not vanish (in fact, this will hold for any representative). The torus acts on $\Gr_0(3,\mathbb{K}^6)$. The action on $Mat_{\mathbb{K}}(3,6)$ by right multiplication of diagonal invertible $6\times 6$ matrices induces an action of the torus $\mathbb{G}_m^6(\mathbb{K})$ on $Gr_0(3,\mathbb{K}^6)$. The Gelfand-MacPherson correspondence (see e.g. \cite[Section 2.6]{KeelEtAl}) provides the identification 
\begin{equation*} X^6 = \Gr_0(3,\mathbb{K}^6)/\mathbb{G}_m^6(\mathbb{K}). \end{equation*}
Here, we view the columns of the matrix representative as the points in the configuration. The Pl\"ucker embedding induces an embedding of $X^6$ into the torus $\mathbb{G}_m^{20}(\mathbb{K})/\mathbb{G}_m^{6}(\mathbb{K}) \cong \mathbb{G}_m^{14}$ as a closed subvariety. Under this correspondence, the 6 points in $\mathbb{P}^2(\mathbb{K})$ lie on a conic if and only if the Pl\"ucker coordinates satisfy
\begin{equation*} C := p_{134}p_{156}p_{235}p_{246} - p_{135}p_{146}p_{234}p_{256} = 0.  \end{equation*}
To see this, note that there is only one conic up to projective transformation, e.g. take $xz = y^2$. So the points lie on a conic if and only if this configuration can be represented by a matrix of the form 
\begin{equation*}
\left[\begin{array}{cccccc}
1 & 1 & 1 & 1 & 1 & 1 \\
a_1 & a_2 & a_3 & a_4 & a_5 & a_6 \\
a_1^2 & a_2^2 & a_3^2 & a_4^2 & a_5^2 & a_6^2 \\
\end{array} \right]
\end{equation*}

for $a_1, \ldots, a_6$ in $\Bbbk$. It suffices to check the above Pl\"ucker identity for this matrix. The vanishing locus of $C$ corresponds to an irreducible Weil divisor of $X^6$. By the description of degree 3 del Pezzo surfaces as blow-ups of $\mathbb{P}^2$ at 6 points in general position, we may identify $Y^6$ with $X^6 \setminus V(C)$. Under this identification, we see that $Y^6$ is a very affine variety and can be realized as a closed subvariety of $\mathbb{G}_m^{15}$ (this follows from \cite[Lemma 6.1]{HackingEtAl}). Therefore, the tropicalization of $Y^6$ may be viewed as the underlying set of a pure $4$ dimensional fan in $\mathbb{R}^{15}$. By \cite{HackingEtAl}, $\trop(Y^6)$ admits a unique coarsest fan structure called the Naruki fan. The coordinates of this fan compute the possible valuations of the Pl\"ucker coordinates $p_{ijk}$ and $C$ (up to the action of $\mathbb{G}_m^6$).

\section{The search for a combinatorial structure to classify Khovanskii bases for degree~3 del Pezzo surfaces}\label{section_examples}

In order to classify 3-dimensional Khovanskii subspaces of $\Bbbk^6$ we are looking for a combinatorial structure which parametrizes equivalence classes of such subspaces. When we identify a right structure (probably a fan of convex polyhedral cones), the next, and the last, step will be to subdivide it such that each chamber in the subdivision corresponds to a different class of moneric bases, some of them Khovanskii.

\subsection{Degree~4 del Pezzo surfaces and $\operatorname{TGr}(2,\mathbb{Q}^5)$}
In the case of the Cox ring of del Pezzo surface of degree~4, i.e.~$G$ being represented by a $2\times 5$ matrix, this role was played by the tropical Grassmannian $\operatorname{TGr}(2,\mathbb{Q}^5)$, introduced in Section~\ref{section_trop}. It is a 7-dimensional fan in the 10-dimensional space, a product of a 5-dimensional lineality space and the cone over the Petersen graph. It is worth noting that this 2-dimensional part is also the tropicalization of (the very affine part of) the moduli space of degree 4 del Pezzo surfaces, see~\cite{RenEtAl}.

The map from the set of equivalence classes of subspaces~$G$ to $\operatorname{TGr}(2,\mathbb{Q}^5)$ is given by the tropical Pl\"ucker coordinates $d_{ij} = -\operatorname{val}(p_{ij})$ for $1\leq i < j \leq 5$. In this way $\operatorname{TGr}(2,\mathbb{Q}^5)$, or its set of integral points, becomes a \emph{good parametrizing set} for equivalence classes of moneric and Khovanskii subspaces (see Definition~\ref{def_equivalence}). This means that it satisfies the conditions of the following important definition.

\begin{definition}\label{def_parameter_space}
  For a set~$M$ to be a \emph{good parametrizing set} for moneric and Khovanskii subspaces of $\Bbbk^n$ we require that for any subspaces~$G$ and~$G'$ mapped to the same point of~$M$, if~$G$ is moneric (resp. Khovanskii), then~$G'$ is also moneric (resp. Khovanskii).
\end{definition}

The reason for this property is that all coefficients in generators of the Cox ring in this case (see~\cite[Thm~4.1]{SturmfelsEtAl}) are monomials in Pl\"ucker coordinates. Thus if $G$ and $G'$ have the same sequence $(d_{ij})$ then they determine the same initial forms of all generators. In particular, if one of them is moneric or Khovanskii, then the second one also is, and obviously they are equivalent.

\subsection{Degree~3 del Pezzo surfaces, $\operatorname{TGr}(3,\mathbb{Q}^6)$ and the Naruki fan}
To find a fan parametrizing moneric subspaces $G$ for the case of del Pezzo surfaces of degree~3 (which can be embedded in~$\mathbb{P}^3$ as smooth cubic surfaces), we tested two natural candidates. The first one is the tropical Grassmannian $\operatorname{TGr}(3,\mathbb{Q}^6)$.

\begin{example}\label{example_comp_tgr}
Take the subspace represented by the matrix $G$ written below. Its sequence of (negatives of) tropical Pl\"ucker's coordinates is
$$(d_{ijk}) = (5, 11, 10, 4, 13, 15, 9, 18, 12, 15, 4, 10, 1, 9, 3, 6, 14, 8, 11, 14).$$
We modify $G$ slightly to the matrix $G'$ by changing the sign of the fourth term in the first row.

$$
G = \left[\begin{array}{cccccc}
      t^4 & t & t^8 & t^3 & t^9 & 1\\
      t^{11} & t^7 & t & t^7 & t^6 & 1\\
      t^9 & 1 & t^5 & t^9 & t^{11} & t^6
\end{array} \right] \qquad \qquad  G' = \left[\begin{array}{cccccc}
      t^4 & t & t^8 & -t^3 & t^9 & 1\\
      t^{11} & t^7 & t & t^7 & t^6 & 1\\
      t^9 & 1 & t^5 & t^9 & t^{11} & t^6
\end{array} \right]
$$

One checks that the modification does not affect the tropical Pl\"ucker coordinates. That is, $G$ and $G'$ are mapped to the same point of $\operatorname{TGr}(3,\mathbb{Q}^6)$. Thus, if a coefficient in the formula for a generator is a monomial in Pl\"ucker coordinates, it will also take the same value for $G$ and $G'$. Recall that all coefficients of generators corresponding to lines, and also some coefficients of generators corresponding to conics, have this form (see Section~\ref{sect-cox-delPezzo}).

However, generators corresponding to conics have also some coefficients which are binomials in Pl\"ucker coordinates, and it turns out that they are the reason for $\operatorname{TGr}(3,\mathbb{Q}^6)$ being insufficient for our task. Look at the generator corresponding to the conic $G_6$ through points 1, 2, 3, 4, 5 (in~\cite[p.~443]{SturmfelsEtAl}):
\begin{align*}
  G_6 &= p_{123}p_{124}p_{125}p_{345}y_1y_2x_3x_4x_5x_6^2 + p_{123}p_{135}p_{134}p_{245}y_1y_3x_2x_4x_5x_6^2\\
  &+ p_{124}p_{134}p_{145}p_{235}y_1y_4x_2x_3x_5x_6^2 + p_{125}p_{135}p_{145}p_{234}y_1y_5x_2x_3x_4x_6^2\\
  &+ p_{123}p_{234}p_{235}p_{145}y_2y_3x_1x_4x_5x_6^2 + p_{124}p_{234}p_{245}p_{135}y_2y_4x_1x_3x_5x_6^2\\
  &+ p_{125}p_{235}p_{245}p_{134}y_2y_5x_1x_3x_4x_6^2 + p_{134}p_{234}p_{345}p_{125}y_3y_4x_1x_2x_5x_6^2\\
  &+ p_{135}p_{235}p_{345}p_{124}y_3y_5x_1x_2x_4x_6^2 + p_{145}p_{245}p_{345}p_{123}y_4y_5x_1x_2x_3x_6^2\\
  &+ (p_{124}p_{235}p_{136}p_{145} - p_{123}p_{245}p_{146}p_{135})\cdot y_1y_6x_2x_3x_4x_5x_6\\
  &+ (p_{124}p_{135}p_{236}p_{245} - p_{123}p_{145}p_{246}p_{235})\cdot y_2y_6x_1x_3x_4x_5x_6\\
  &+ (p_{134}p_{125}p_{236}p_{345} + p_{123}p_{145}p_{346}p_{235})\cdot y_3y_6x_1x_2x_4x_5x_6\\
  &+ (p_{124}p_{135}p_{346}p_{245} - p_{134}p_{125}p_{246}p_{345})\cdot y_4y_6x_1x_2x_3x_5x_6\\
  &+ (p_{125}p_{134}p_{356}p_{245} - p_{135}p_{124}p_{256}p_{345})\cdot y_5y_6x_1x_2x_3x_4x_6\\
  &+ (p_{124}p_{135}p_{236}p_{456} - p_{123}p_{145}p_{246}p_{356})\cdot y_6^2x_1x_2x_3x_4x_5.
\end{align*}

Note that the signs are different that in~\cite{SturmfelsEtAl}, which is a result of permuting the indices in Pl\"ucker coordinates.

We compute the valuation of its second binomial coefficient, $p_{124}p_{135}p_{236}p_{245} - p_{123}p_{145}p_{246}p_{235}$: for $G$ it is 36, but for $G'$ it is 37. Both monomials have valuation 36, as shown by the sequence $(d_{ijk})$, but for $G'$ the coefficients are such that the lowest terms cancel.

Moreover, computation of the remaining coefficients for $G_6$ show that in both cases this is the minimal valuation. Only for $G$ it is the smallest one, and for $G'$ there are more coefficients with valuation 37. We obtain that initial forms of $G_6$ are
$$ -2x_1x_3x_4x_5x_6y_2y_6 \quad \hbox{and} \quad -x_1x_4x_5(x_6^2y_2y_3+2x_3x_6y_2y_6+x_2x_3y_6^2)$$
for $G$ and $G'$ respectively. That is, $G'$ is \emph{not moneric}, and one can check by computing other generators that~$G$ is. This example was constructed using \emph{Macaulay2}~\cite{M2}.
\end{example}

Thus we have two subspaces mapped to a single point of $\operatorname{TGr}(3,\mathbb{Q}^6)$, such that one is moneric and the second is not. This shows that the tropical Grassmannian is too coarse to be a good parametrizing set: the property of being moneric is not well-defined for its points. It is worth noting that the point corresponding to~$G$ and~$G'$ does not lie in the interior of a maximal cone of $\operatorname{TGr}(3,\mathbb{Q}^6)$, but we expect the same phenomenon to appear also at interior points of maximal cones.

The second candidate for the parametrizing space is, as suggested in~\cite[Prob.~5.4]{SturmfelsEtAl}, the tropical moduli space of (smooth, marked) del Pezzo surfaces of degree~3. Its combinatorial structure is the Naruki fan, described in Section~\ref{section_trop} (see also~\cite{HackingEtAl} and~\cite[Sect.~6]{RenSamSturmfels}). Recall that a $3\times 6$ matrix~$G$ corresponds to a sequence of coordinates which are either monomials in Pl\"ucker coordinates or products of such monomials and a binomial $C = p_{134}p_{156}p_{235}p_{246} - p_{135}p_{146}p_{234}p_{256}$, which encodes the condition for 6 points lying on a conic.

\begin{example}\label{example_comp_naruki}
We compute the value of the binomial $C$ for both $G$ and $G'$ and obtain the result that they both have valuation 37. This means that these matrices are mapped to the same point in $\mathrm{trop}(Y^6)$, hence this is not a good space for parametrizing moneric classes.
\end{example}

To summarize, Examples~\ref{example_comp_tgr} and~\ref{example_comp_naruki} prove the following result.
\begin{proposition}
Neither the tropical Grassmannian $\operatorname{TGr}(3,\mathbb{Q}^6)$ nor the Naruki fan is a good parametrizing set for moneric classes of 3-dimensional subspaces of~$\Bbbk^6$ in the sense of Definition~\ref{def_parameter_space}.
\end{proposition}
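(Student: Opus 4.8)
The plan is to refute the defining property of a good parametrizing set (Definition~\ref{def_parameter_space}) directly: for each of the two candidate spaces it suffices to exhibit a single pair of subspaces $G, G'$ with a common image but opposite moneric status. The two examples preceding the statement already furnish such a pair, so the proof amounts to assembling them and isolating the reason they work.

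First I would record the structural fact about where the moneric property can escape detection by the tropical Pl\"ucker coordinates $d_{ijk} = -\operatorname{val}(p_{ijk})$. Every coefficient of a line generator, and most coefficients of the conic generator $G_6$, is a single monomial in the $p_{ijk}$; its valuation is then a function of the $d_{ijk}$ alone and is unaffected by any change preserving all Pl\"ucker valuations. The exceptions are the binomials $u - v$ occurring in $G_6$: when $\operatorname{val}(u) \ne \operatorname{val}(v)$ the valuation of $u - v$ is again fixed by the $d_{ijk}$, and the sole ambiguity is the case $\operatorname{val}(u) = \operatorname{val}(v)$, where the leading terms may or may not cancel, an event invisible to the $d_{ijk}$.

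With this in mind I would verify the pair of Example~\ref{example_comp_tgr}, in which $G'$ arises from $G$ by flipping the sign of a single entry. Such a flip leaves the valuation of every individual Pl\"ucker monomial untouched, and one checks that in each affected minor the minimal valuation is attained off the altered entry, so all $d_{ijk}$ coincide and $G, G'$ share a point of $\operatorname{TGr}(3,\mathbb{Q}^6)$. The perturbation is arranged, however, so that in the coefficient $p_{124}p_{135}p_{236}p_{245} - p_{123}p_{145}p_{246}p_{235}$ of $y_2y_6x_1x_3x_4x_5x_6$ the two monomials, both of valuation $36$, cancel to valuation $37$ for $G'$ while surviving for $G$. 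Comparing all coefficients of $G_6$ then yields the monomial $\operatorname{in}(G_6) = -2x_1x_3x_4x_5x_6y_2y_6$ for $G$ but the non-monomial $\operatorname{in}(G_6) = -x_1x_4x_5(x_6^2y_2y_3 + 2x_3x_6y_2y_6 + x_2x_3y_6^2)$ for $G'$. Hence $G$ is moneric and $G'$ is not, and $\operatorname{TGr}(3,\mathbb{Q}^6)$ fails the criterion.

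For the Naruki fan I would invoke Example~\ref{example_comp_naruki}. Its coordinates on $\operatorname{trop}(Y^6)$ are the valuations of the $p_{ijk}$ together with that of the single conic binomial $C = p_{134}p_{156}p_{235}p_{246} - p_{135}p_{146}p_{234}p_{256}$. Since the $d_{ijk}$ already agree, it only remains to check the extra coordinate $\operatorname{val}(C)$, and a direct computation gives $\operatorname{val}(C) = 37$ for both $G$ and $G'$. Thus the same pair maps to one point of the Naruki fan, and the identical moneric discrepancy shows it too is not a good parametrizing set. The conceptual point is that the cancellation responsible for the failure lives in a binomial coefficient of $G_6$ \emph{different} from $C$, so it is invisible even to the refined coordinate $\operatorname{val}(C)$; accordingly the main obstacle is not conceptual but computational, namely engineering one sign flip that simultaneously fixes every $d_{ijk}$ and $\operatorname{val}(C)$ yet toggles cancellation in exactly one coefficient of $G_6$ --- a verification naturally carried out in \emph{Macaulay2}.
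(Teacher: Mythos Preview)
Your proposal is correct and follows essentially the same route as the paper: both arguments rest entirely on the explicit pair $G,G'$ of Examples~\ref{example_comp_tgr} and~\ref{example_comp_naruki}, first checking that the tropical Pl\"ucker vectors (and then $\operatorname{val}(C)$) agree, and then exhibiting the divergent initial forms of $G_6$. Your added structural remark---that the only place the $d_{ijk}$ can fail to control the initial form is at a binomial coefficient with tied monomial valuations---is a helpful gloss, though your phrase ``the minimal valuation is attained off the altered entry'' is not quite the right sufficient condition (flipping a sign preserves the valuation of the cofactor term itself; what matters is that no new cancellation is created or destroyed in the $3\times 3$ expansion), and the paper simply records this as a computational check.
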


The conclusion is that to find a good parametrizing set for our problem we should probably look for an another variety (maybe a different embedding of $Y^6$), whose coordinates are more closely related to binomials which appear in the Cox ring conic generators. Of 36 binomials appearing in 6 conic generators, 6 are equivalent (up to a Pl\"ucker relation) to $C$, and the remaining 30 are different, and also pairwise different. Hence our strategy will be to consider a variety embedded in a projective space using all~31 equivalence classes of binomials, tropicalize it and subdivide the fan structure obtained in this way to parametrize 3-dimensional moneric and Khovanskii subspaces of~$\Bbbk^6$.

We finish with a remark that the tropical moduli space of cubic surfaces is not sufficient for one more reason: it requires being enlarged by adding a lineality space to the fan.

\begin{example}
  Consider
    $$G'' = \left[\begin{array}{cccccc}
      1 & t & t^8 & t^3 & t^9 & 1\\
      t^{7} & t^7 & t & t^7 & t^6 & 1\\
      t^{5} & 1 & t^5 & t^9 & t^{11} & t^6
\end{array} \right]$$
  which comes from $G$ by multiplying the first column by $1/t^4$. Note that if we treat columns of a matrix as coordinates of points in $\mathbb{P}^2$, then $G$ and $G''$ represent the same choice of 6 points, so the same marked del Pezzo surface. Note also that if we looked at the kernels of $G$ and $G''$ as choices of 6 points in $\mathbb{P}^2$, we would also get the same sets, because multiplying the first column of $G$ by $1/t^4$ corresponds to multiplying the first row of a matrix representing $\operatorname{ker} G$ by $t^4$. However, one can compute the conic generator $G_6$ for $G''$ and learn that it has a binomial leading term, so $G''$ is not moneric. 
\end{example}

This shows that the property of being moneric is not well-defined for a marked del Pezzo surface -- its behaviour varies in the set of matrices representing the same choice of 6 points on the plane. The same phenomenon can be observed also in the case of degree~4 del Pezzo surfaces, where $\operatorname{TGr}(2,\mathbb{Q}^5)$ was used to parametrize moneric subspaces. This is one of the reasons for considering the full $\operatorname{TGr}(2,\mathbb{Q}^5)$, not only the tropicalization of the moduli space of degree~4 del Pezzo surfaces, i.e. the cone over the Petersen graph. The lineality space is equally important. The subdivision determining equivalence classes of moneric subspaces is not a pull-back of a subdivision of the cone over the Petersen graph to $\operatorname{TGr}(2,\mathbb{Q}^5)$ via the projection along the lineality space, it cuts through fibers of this projection. Thus, by analogy, we expect that to use some variant of the tropical moduli space of del Pezzo surfaces of degree~3 to parametrize 3-dimensional moneric subspaces of~$\Bbbk^6$ we should also enlarge it by adding a lineality space.

\begin{acknowledgement}
This article was initiated during the Apprenticeship Weeks (22 August-2 September 2016), led by Bernd Sturmfels, as part of the Combinatorial Algebraic Geometry Semester at the Fields Institute. The authors are very grateful to Bernd Sturmfels for suggesting the problem, discussions and encouragement.
Daniel Corey was supported by NSF CAREER DMS-1149054. Maria Donten-Bury was supported by a Polish National Science Center project 2013/11/D/ST1/02580. Naoki Fujita was supported by Grant-in-Aid for JSPS Fellows (No.~16J00420).
\end{acknowledgement}

\end{document}